\newcommand{\ee}{{\mathrm e}}
\newcommand{\ii}{{\mathrm i}}
\newcommand{\ham}{{\cal H}}
\newcommand{\AAA}{\mathcal{A}}
\newcommand{\schr}{Schr\"odinger }
\newcommand{\norm}[2]{\left\| #2 \right\|_{#1}}
\newcommand{\Int}[4]{\int_{#2}^{#3}\!#4\,\mathrm{d}#1}
\newtheorem{example}{Example}
\numberwithin{algorithm}{section}
\newcounter{myalg}
\numberwithin{myalg}{section}
\def\openmyalg#1#2{
  \refstepcounter{myalg} \medskip
  {\noindent\bf#1~\themyalg\if#2!{. }\else{ (#2).}\fi} }
\newenvironment{myalg}[1][!]{\openmyalg{Algorithm}{#1}}{\medskip}
\begin{document}
\title{Computation of some dispersive equations through their iterated linearisation}

\author{Guannan Chen\footnote{Department of Mathematical Sciences, University of Bath, email \texttt{gc759@bath.ac.uk}.} \and Arieh Iserles\footnote{Centre for Mathematical Sciences, University of Cambridge, email \texttt{ai@maths.cam.ac.uk}.} \and Karolina Kropielnicka\footnote{Institute of Mathematics, Polish Academy of Sciences, email \texttt{kkropielnicka@impan.pl}.} \and Pranav Singh\footnote{Department of Mathematical Sciences, University of Bath, email \texttt{ps2106@bath.ac.uk}.}}

\thispagestyle{empty}

\maketitle

\begin{abstract}
  It is often the case that, while the numerical solution of the non-linear dispersive equation $\ii\partial_t u(t)=\mathcal{H}(u(t),t)u(t)$ represents a formidable challenge, it is fairly easy and cheap to solve closely related linear equations of the form $\ii\partial_t u(t)=\mathcal{H}_1(t)u(t)+\widetilde{\mathcal H}_2(t)u(t)$, where $\mathcal{H}_1(t)+\mathcal{H}_2(v,t)=\mathcal{H}(v,t)$. In that case we advocate an iterative linearisation procedure that involves fixed-point iteration of the latter equation to solve the former. A typical case is when the original problem is a nonlinear Schr\"odinger or Gross--Pitaevskii equation, while the `easy' equation is linear Schr\"odinger with time-dependent potential.

  We analyse in detail the iterative scheme and its practical implementation, prove that each iteration increases the order, derive upper bounds on the speed of convergence and discuss in the case of nonlinear Schr\"odinger equation with cubic potential the preservation of  structural features of the underlying equation: the $\CC{L}_2$ norm, momentum and Hamiltonian energy. A key ingredient in our approach is the use of the Magnus expansion in conjunction with Hermite quadratures, which allows effective solutions of the linearised but non-autonomous equations in an iterative fashion. The resulting Magnus--Hermite methods can be combined with a wide range of numerical approximations to the matrix exponential. The paper concludes with a number of numerical experiments, demonstrating the power of the proposed approach.
\end{abstract}

\section{Introduction}
\label{sec:intro}
In this paper we concern ourselves with nonlinear dispersive equations in a Hamiltonian form,
\begin{equation}
\label{eq:dispersive}
\ii \partial_t u(t) = \ham(u(t),t)\, u(t), \qquad u(0) = u_0 \in \BB{H},\quad t \geq 0,
\end{equation}
evolving on a complex-valued Hilbert space $\BB{H}$. In many cases of interest the time-dependent and nonlinear Hamiltonian $\ham$,
\begin{equation}
\label{eq:Hamiltonian}
\ham(v,t) = \mathcal{L}(t) + f(v), \qquad f(v) = g(v,\overline{v}),
\end{equation}
comprises of a linear self-adjoint operator on $\BB{H}$, $\mathcal{L}(t)$, and a nonlinear component, $f$, which is a real-valued function.  We assume throughout that $\BB{H}$ is a Sobolev space determined by the nature of the Hamiltonian $\mathcal{H}$ but, to make sense of concepts like the order of a method we require additional regularity, as appropriate.

The main idea underlying the proposed scheme is that it is often considerably easier to solve {\em linear\/} dispersive equations with time-variable potential and to leverage this to the solution of their nonlinear counterparts. We consider the equation  (\ref{eq:dispersive}) written in the form
\begin{equation}
  \label{eq:easy}
  \ii \partial_t u(t)=\ham_1(t)u(t)+\ham_2(u(t),t)u(t),
\end{equation}
such that the equation
\begin{displaymath}
   \ii \partial_t u(t)=\ham_1(t)u(t)+ \widetilde{\ham}_2(t)u(t)
\end{displaymath}
is easy to solve for a reasonable linear Hamiltonian $\widetilde{\ham}_2$. (While this often corresponds to the separation into potential and kinetic parts, this is not necessary for our argument.) In that case we propose the iterative scheme
\begin{Eqnarray}
  \label{eq:IterativeScheme}
  u^{[0]}(t)&=&u_0(t),\\
  \nonumber
  \ii\partial_t u^{[k+1]}(t)&=&\ham_1(t)u^{[k+1]}(t)+\ham_2(u^{[k]}(t),t)u^{[k+1]},\qquad k=0,1,\ldots,
\end{Eqnarray}
where the linear equations are equipped with the same initial and boundary side conditions as (\ref{eq:dispersive}). Note that, even if $\ham(u,t)=\ham(u)$, the linearised equation has a time-dependent potential. This is not a significant issue, provided that we have effective means for a numerical treatment of this case (as we do in the sequel). Note that in the case \R{eq:Hamiltonian} the iterative scheme simplifies to
\begin{displaymath}
  \ii\partial_t u^{[k+1]}(t)=\mathcal{L}(t)u^{[k+1]}(t)+f(u^{[k]}(t))u^{[k+1]}(t),\qquad k=0,1,\ldots.
\end{displaymath}

We terminate the iteration after a suitable number of steps. Needless to say, even taking for granted that (\ref{eq:easy}) is easy to solve repeatedly, the first relevant issue is the convergence of (\ref{eq:IterativeScheme}) to the exact solution of (\ref{eq:dispersive}) and its speed. The second issue, in the spirit of geometric numerical integration \cite{hairer06gni}, is whether important geometric features of (\ref{eq:dispersive}), e.g.\ Hamiltonicity or $\CC{L}_2$ conservation, are preserved by the scheme (\ref{eq:IterativeScheme}).

It is important to mention that our method is {\em not\/} an operatorial splitting \cite{blanes24smd,mclachlan02sm}, an approach ubiquitous in modern numerical analysis of PDEs, since we do not compose the solution from a split form of (\ref{eq:easy}), $\ii\partial_t u=\ham_1(t)u$ and $\ii\partial_t u=\ham_2(u,t)u$.  It is perhaps more illuminating to consider our approach as an iterative linearisation of the PDE.

The proposed schemes will be illustrated in this paper with the example of the cubic nonlinear \schr (NLS) equation,
\begin{equation}
\label{eq:NLS}
\ii \partial_t u(t) = -\Delta u(t) + \lambda |u(t)|^2 u(t), \qquad u(0) = u_0 \in \BB{H},\quad t \geq 0, \quad \lambda \in \BB{R},
\end{equation}
as well as the Gross--Pitaevskii (GP) equation,
\begin{equation}
\label{eq:GP}
\ii \partial_t u(t) = (-\Delta + V^0  + V^{\mathrm{e}}(t)) u(t) + \lambda |u(t)|^2 u(t), \qquad u(0) = u_0 \in \BB{H},\quad t \geq 0, \quad \lambda \in \BB{R},
\end{equation}
where it is typical to consider $\BB{H} = \CC{H}^1(\BB{K};\BB{C})$ and either $\BB{K} = \BB{R}^d$ or $\BB{K} = \BB{T}^d$. (In reality we implicitly assume higher regularity, otherwise our discussion of order and speed of convergence in Section~3 need be amended in a transparent manner.) A negative value of $\lambda$ corresponds to a focusing equation, while $\lambda>0$ to a defocusing one. In the case of the cubic NLS example, $\mathcal{L}(t) = -\Delta$, $g(v,w) = \lambda \langle v, w \rangle$ and $f(v) = \lambda |v|^2$. The linearisation (\ref{eq:easy}) splits the kinetic part $\ham_1(u,t)=-\Delta$ and the potential part $\ham_2(u,t)=\lambda|u|^2$.
In the case of the Gross--Pitaevskii (GP) equation, $\mathcal{L}(t) = -\Delta +V^0(x) + V^{\mathrm{e}}(x,t)$ for some real-valued functions $V^0$ (typically a confining potential) and $V^{\mathrm{e}}$ (typically an external time-dependent potential).

The iteratively linearised schemes developed in this work can be combined with a wide range of methods for approximating the matrix exponential. In Section~\ref{sec:numerics} we present numerical examples to demonstrate the combination of our schemes with splitting methods and Krylov subspace methods.

\setcounter{equation}{0}
\setcounter{figure}{0}
\section{The algorithm}
\label{sec:algorithm}

\subsection{Iterated linearisation}

It is a good idea to recall that we are solving the dispersive PDE
\begin{displaymath}
  \ii\partial_t u(t)=\ham(u(t),t)u(t),\qquad u(0)=u_0,
\end{displaymath}
by the iterative procedure
\begin{displaymath}
  \ii\partial_t u^{[k+1]}(t)=\ham_1(t)u^{[k+1]}(t)+\ham_2(u^{[k]}(t),t)u^{[k+1]},\qquad k=0,1,\ldots,
\end{displaymath}
with the initial condition $u_0$ and the same boundary conditions as the original PDE. Formally, the later defines functional iteration,
\begin{equation}
  \label{eq:functional}
  u^{[k+1]}=G(u^{[k]})
\end{equation}
in the ambient Hilbert space $\BB{H}$ and our first concern is with the convergence of this procedure from the initial guess $u^{[0]}=u_0$. In other words, we wish to ascertain whether a fixed point $\hat{u}$ of \R{eq:functional}, which clearly is a solution of \R{eq:dispersive}, is attractive and whether $u_0$ belongs to its basin of attraction. In practical terms, we wish to demonstrate that $G$ is a contraction mapping in a closed neighbourhood of $\hat{u}\in\BB{H}$, i.e.\ $\CC{Lip}_{\bar{B}_r(\hat{u})}(G) < 1$ for some $r>0$, where $\bar{B}_r(w)\subset\BB{H}$ is a closed ball of radius $r$ about $w$. Note that this might require a restriction upon the time step so that $h\leq r$.

In practice the PDE \R{eq:IterativeScheme} is computed by a sequence of existing solvers for linear dispersive equations, $\mathcal{S}_k$,
\begin{displaymath}
  u^{[k+1]}(t_{n+1}) = \mathcal{S}_k(u^{[k]},u(t_n)), \quad k \geq 0,
\end{displaymath}
either for a fixed number of iterations $k = 0, \ldots, K-1$ or terminating when the update in step $k+1$, $\norm{\CC{L}^2}{u^{[k+1]}(t_n) - u^{[k]}(t_n)} \leq \delta$, is smaller than some threshold $0<\delta\ll 1$. In other words, $\mathcal{S}_k$ is a discretisation of the solution map $G$ in step $k$. We allow the dependence of the discrete solution operator upon $k$ because for different iterations we might use, for example, solvers of different accuracy.

For the cubic NLS (\ref{eq:NLS}), for instance, we solve a sequence of linear \schr equations,
\begin{equation}
\label{eq:lin_NLS}
\ii \partial_t u^{[k+1]}(t) = -\Delta u^{[k+1]}(t) + V^{\mathrm{nl}, {[k]}}(t) u^{[k+1]}(t), \qquad u^{[k+1]}(t_n) = u(t_n),
\end{equation}
where
\begin{displaymath}
  V^{\mathrm{nl}, {[k]}}(t) = \lambda |u^{[k]}(t)|^2,
\end{displaymath}
acts as a time-dependent potential.

\subsection{The Magnus expansion}
The solution of {\em linear\/} dispersive equations with time-dependent Hamiltonians can be approximated effectively using the Magnus expansion \cite{blanes16cig,iserles00lgm,hochbruck2003magnus,kormann2008accurate},. Specifically, $u(t_{n+1})\approx \ee^{h\Theta(t_{n+1},t_n)}u(t_n)$ where, for example, a fourth-order Magnus expansion for the linear dispersive equation
\[\ii \partial_t u(t) = \AAA(t) u(t), \]
with time-dependent vector field $\AAA(t)$ is
\begin{equation}
\label{eq:MagnusO2}
\Theta_2(t_{n+1},t_n) = -\ii \Int{\zeta}{0}{h}{\AAA(t_n+\zeta)} + \Frac12\Int{\zeta}{0}{h}{\Int{\xi}{0}{\zeta}{ [\AAA(t_n+\xi),\AAA(t_n+\zeta)]  }   }.
\end{equation}
Supposing \R{eq:Hamiltonian},  the  PDE \R{eq:IterativeScheme} is linear,
\begin{displaymath}
  \ii\partial_t u^{[k+1]}(t)=[\mathcal{L}(t)+f(u^{[k]}(t),t)] u^{[k+1]},
\end{displaymath}
and we may set $\AAA(t)=\AAA_k(t):={\mathcal L}(t)+f(u^{[k]}(t),t)$. Using the Magnus expansion, the solution can be approximated as
\begin{equation}
\label{eq:Mag_uk}
u^{[k+1]}(t_{n+1}) = \exp(\Theta_2(t_{n+1},t_n; u^{[k]}))\, u(t_n), \qquad k \geq 0,
\end{equation}
where  $u^{[k]}$ indicates explicitly the dependence of the vector field on the previous iterate.

The expression \R{eq:Mag_uk} is not a `real' numerical method because in practice it requires a semidiscretisation of $\AAA$, computation of the integrals in \R{eq:MagnusO2} and the computation (or approximation) of matrix exponential. Each of these steps might be costly -- cf.\ \cite{bader14eas} for a more detailed discussion -- and ideally we need additional ideas to bring down the expense of the computation.

If the linear self-adjoint operator $\mathcal{L}(t)$ can be decomposed into time-independent and time-dependent components, $\mathcal{L}(t) = \mathcal{L}_0 + \mathcal{L}_1(t)$,
the commutator in \R{eq:MagnusO2} can be simplified as
\[ [\AAA(t+\xi),\AAA(t+\zeta)] = [\mathcal{L}_0, \mathcal{L}_1(t+\zeta) - \mathcal{L}_1(t+\xi)] + [\mathcal{L}_0, f(u^{[k]}(t+\zeta)) - f(u^{[k]}(t+\xi))]. \]

The Magnus expansion \R{eq:MagnusO2} reduces to
\begin{eqnarray}
\nonumber
\Theta_2(t_{n+1},t_n) &\!\!\! = \!\!\!& -\ii\, \left(h \mathcal{L}_0 + \mu_{0,0}^{\mathcal{L}_1}(t_{n+1},t_n) + \mu_{0,0}^{f \circ u^{[k]}}(t_{n+1},t_n)\right) \\
\label{eq:MagnusO2simplified}&\!\!\!\!\!\! &\mbox{} + [\mathcal{L}_0, \mu_{1,1}^{\mathcal{L}_1}(t_{n+1},t_n)]+ [\mathcal{L}_0, \mu_{1,1}^{f \circ u^{[k]}}(t_{n+1},t_n)],
\end{eqnarray}
where $\mu_{j,k}$ are the line integrals defined in \cite{iserles19csf,iserles19sse},
\begin{displaymath}
\mu_{j,k}^\mathcal{P}(t_{n+1},t_n) = \Int{\zeta}{0}{h}{\tilde{B}_j^k(h,\zeta) \mathcal{P}(t+\zeta)},
\end{displaymath}
and $\tilde{B}_j(h,\zeta) = h^j B_j(\zeta/h)$ is the rescaled $j$th Bernoulli polynomial. It is known that $\mu_{0,0} = \O{h}$ and $\mu_{1,1} = \O{h^3}$.

In the context of the linearised \schr equations \R{eq:lin_NLS} that appear in the iterative solution of the cubic NLS \R{eq:NLS},
$\mathcal{L}_0 = -\Delta$ and $\mathcal{L}_1\equiv0$. In the context of the Gross--Pitaevskii equation, $\mathcal{L}_0 = -\Delta + V^0(x)$ and $\mathcal{L}_1(t) = V^{\mathrm{e}}(x,t)$ is the external time-dependent potential.

\subsection{Computing with information at the endpoints}

The integrals in \R{eq:MagnusO2} can be computed to fourth order using just two Gaussian points in $[t_n,t_{n+1}]$ \cite{iserles00lgm} but this takes no account of the iterative nature of \R{eq:IterativeScheme}. To explain why, consider the second iteration $u^{[2]}$: its computation means that we need to evaluate $u^{[1]}$ at the two Gaussian points, $t_n+c_\pm h$, where $c_\pm=\frac12\pm\frac{\sqrt{3}}{6}$. This means, though, that we need {\em three\/} values of $u_0$: $t_n+c_-^2h$ and $t_n+c_-c_+h$ to approximate $u^{[1]}(t_n+c_-h)$, while $t_n+c_-c_+h$ (again) and $t_n+c_+^2h$ are required to approximate $u^{[1]}(t_n+c_+h)$. In the general case, it is easy to ascertain that we need $k+1$ function evaluations of $u_0$ to approximate $u^{[k]}$ to fourth order. This clearly is suboptimal and we wish to avoid this unwelcome linear growth.\footnote{This becomes even worse once we seek higher-order methods. We need three function evaluations to compute all Magnus integrals to order 6 -- the formula is more complicated than \R{eq:MagnusO2} \cite{iserles00lgm} -- but this is translated into ${{k+2}\choose 2}$ evaluations of $u_0$ to compute $u^{[k]}$.}


The remedy to this state of affairs if to use {\em Hermite quadrature\/} \cite{micchelli73qfh}, employing just the values of the integrand {\em and its derivatives\/} at the endpoints. To attain order~4  in both integrals, we need to know just the integrand and its derivative at $t_n$ and $t_{n+1}$: altogether $2K$ evaluations for  $K$ iterations.

The fourth-order Magnus expansion \R{eq:MagnusO2} requires  a fourth-order approximation of the integrals. This can be achieved via  Hermite quadrature utilising only the Hamiltonian and its time-derivative, $\ham(t_{n})$, $\ham(t_{n+1})$, $\partial_t \ham(t_{n})$ and $\partial_t \ham(t_{n+1})$, at the two end points of the time interval $[t_n,t_{n+1}]$. The main idea, following \cite{iserles99sld,iserles00lgm,zanna98ons}, is to form a Hermite interpolation polynomial (in the present case a cubic) $\mathcal{B}$, say,  replace $\AAA$ by $\mathcal{B}$ in \R{eq:MagnusO2} and integrate exactly the ensuing polynomial expressions.

More concretely, this can be directly applied to the approximation of the integrals in \R{eq:MagnusO2simplified}. In particular, since $\mathcal{L}_1$ is expected to be known for all $t \in [t_n,t_{n+1}]$, a variety of quadrature approximations can be utilised for computing $\mu_{1,1}^{\mathcal{L}_1}(t_{n+1},t_n)$ up to the required accuracy of $\O{h^5}$. In contrast, $u^{[k]}$ is not known at all $t \in [t_n,t_{n+1}]$ and we use the fourth-order Hermite approximations,
\begin{Eqnarray*}
 \mu_{0,0}^{\mathcal{P}}(t_{n+1},t_n) & \approx & \frac{h}{2} \left(\mathcal{P}(t_n)+ \mathcal{P}(t_{n+1})\right) + \frac{h^2}{12} \left(\partial_t \mathcal{P}(t_n) - \partial_t \mathcal{P}(t_{n+1})\right),\\
 \mu_{1,1}^{\mathcal{P}}(t_{n+1},t_n) & \approx & \frac{h^2}{2} \left(\mathcal{P}(t_{n+1})- \mathcal{P}(t_{n})\right) - \frac{h^3}{120} \left(\partial_t \mathcal{P}(t_n) + \partial_t \mathcal{P}(t_{n+1})\right),
\end{Eqnarray*}
for approximating $\mu_{0,0}^{f \circ u^{[k]}}$ and $\mu_{1,1}^{f \circ u^{[k]}}$.

{\bf General dispersive PDE.} For a general dispersive PDE \R{eq:dispersive} the nonlinear function is assumed to be of the form $f(v) = g(v,\bar{v})$ and the derivative can be computed as
\begin{Eqnarray*}
 \partial_t (f(u^{[k]}(t)) &=&  \partial_t (g(u^{[k]}(t),\overline{u^{[k]}(t)})\\
 & = & \CC{D}_1 g(u^{[k]}(t),\overline{u^{[k]}(t)})\, \partial_t u^{[k]}(t) +  \CC{D}_2 g(u^{[k]}(t),\overline{u^{[k]}(t)})\, \overline{\partial_t u^{[k]}(t)}\\
 & = & - \ii \left( \CC{D}_1 g(u^{[k]}(t),\overline{u^{[k]}(t)}) - \CC{D}_2 g(u^{[k]}(t),\overline{u^{[k]}(t)})\right)\! \mathcal{H}(u^{[k]}(t), t) u^{[k]}(t).
\end{Eqnarray*}
{\bf The cubic nonlinear Schr\"odinger (NLS) and Gross--Pitaevskii (GP) equations.}
In the case of cubic nonlinearity in the NLS or GP equations,
$f(v) = \lambda |v|^2$, $g(v,w) = \lambda \langle v, w \rangle$, $\CC{D}_1 g(v,w) = \lambda  \CC{L}_w$ and $\CC{D}_2 g(v,w) = \lambda  \CC{L}_v$, where $\CC{L}_w$
is the (conjugate) linear functional $\bar{\CC{L}}_w(u) = \langle u, w \rangle$, while  $\CC{L}_v$ is the linear functional $\bar{\CC{L}}_v(u) =  \langle v, u \rangle$.
Consequently,
\begin{Eqnarray*}
\nonumber \partial_t (f(u^{[k]}(t)) &=&  2 \lambda \Re \langle u^{[k]}(t), \partial_t u^{[k]}(t)\rangle \\
& = & - 2 \ii \lambda \Re \left\langle u^{[k]}(t), \left(\mathcal{L}(t) +  \lambda |u^{[k]}(t)|^2\right)\, u^{[k]}(t)\right\rangle,
\end{Eqnarray*}
which can be readily evaluated at $t_n$ and $t_{n+1}$.
The distinction between the NLS and GP equations is that $\mathcal{L}(t) = -\Delta$ in the case of the NLS, whereas $\mathcal{L}(t) = -\Delta + V^0(x) + V^{\mathrm{e}}(x,t)$ in the case of GP.

\subsubsection{Pseudocode for cubic NLS and GP}

A schematic pseudocode for our algorithm, as applied to the cubic NLS and Gross--Pitaevskii equation, is described in Algorithm~\ref{alg:GP}.
\begin{framed}
\begin{myalg}[Iterative solver for cubic NLS and GP]
  \label{alg:GP}
  \begin{algorithmic}[1]
    \REQUIRE{$\lambda, T, N$ and $u_0 \in \mathcal{H}$}
    \STATE $h=T/N$
    \FOR{$n = 0, 1, 2, \dots, N$}

        \STATE $V^{\mathrm{nl}}_n = \lambda |u_n|^2$

         $v_n = (-\Delta + V^0 + V^{\mathrm{e}}(t_n) + V^{\mathrm{nl}}_n)\, u_n$

         $dV^{\mathrm{nl}}_n = - 2 \ii \lambda \Re \langle u_n, v_n \rangle$

        \STATE $\mu^{\mathrm{e}}_{0,0}, \mu^{\mathrm{e}}_{1,1} = \mathrm{Quadrature}(V^{\mathrm{e}}(t), t_n, h)$

        \STATE $u^{[0]}_{n+1} = u_n$

        \FOR{$k= 0, 1, \dots K-1$}

        \STATE $V^{\mathrm{nl}}_{n+1} = \lambda |u_{n+1}^{[k+1]}|^2$

         $v_{n+1} = (-\Delta + V^0 + V^{\mathrm{e}}(t_{n+1}) + V^{\mathrm{nl}}_{n+1})\, u_{n+1}^{[k+1]}$

         $dV^{\mathrm{nl}}_{n+1} = - 2 \ii \lambda \Re \langle u_{n+1}^{[k+1]}, v_{n+1} \rangle$

        \STATE $\mu^{\mathrm{nl}}_{0,0}, \mu^{\mathrm{nl}}_{1,1} = \mathrm{Hermite}(V^{\mathrm{nl}}_n, V^{\mathrm{nl}}_{n+1}, \partial_t V^{\mathrm{nl}}_n, \partial_t V^{\mathrm{nl}}_{n+1})$

        \STATE $\mu_{0,0} = \mu^{\mathrm{e}}_{0,0} + \mu^{\mathrm{nl}}_{0,0}$, $\quad \mu_{1,1} = \mu^{\mathrm{e}}_{1,1} +  \mu^{\mathrm{nl}}_{1,1}$

        \STATE $\Theta_2 = \ii h \Delta - \ii h V^0 -\ii \mu_{0,0} - \left[\Delta,  \mu_{1,1} \right]$

        \STATE $u^{[k+1]}_{n+1} = \exp\left(\Theta_2\right) u_n$
        \ENDFOR
        \STATE $u_{n+1} = u^{[K]}_{n+1}$
    \ENDFOR
  \end{algorithmic}
\end{myalg}
\end{framed}

For most problems of interest $K=3$  suffices for fourth-order  convergence, cf.\ Theorem~3. Note that any reasonable quadrature can be used to compute $\mu^{\mathrm{e}}_{0,0}$ and $\mu^{\mathrm{e}}_{1,1}$.














\subsection{Approximation of the exponential}

The exponential of the Magnus expansion  $\Theta_2$, \eqref{eq:MagnusO2simplified}, in Algorithm~\ref{alg:GP} only needs to be computed to a fourth-order accuracy.
This can be achieved using a wide range of methods for matrix exponential \cite{moler1978nineteen}. Among these, some of the most widely utilised methods are splitting methods \cite{mclachlan02sm,blanes24smd} and Krylov subspace methods \cite{saad2003iterative}.


Splitting methods are particularly useful when the exponent can be split into two parts, which are easy to exponentiate separately. In the context of the cubic NLS and GP equations, the exponent $\Theta_2$ is a sum of a Laplacian, a multiplication operator, and a commutator involving a Laplacian and a multiplicative operator.

In the absence of the commutator, splitting methods could be effectively used to compute the exponential of the Magnus expansion \R{eq:MagnusO2simplified}. However, the presence of the commutator makes this infeasible. The presence of the commutator can also make Krylov iterations more expensive. However, if required, this commutator can be eliminated with minimal additional cost.

We note that if $B$ and $C$ commute, a simple application of the Baker--Campbell--Hausdorff formula shows that
\[ \exp(h A + h B + h^3 [A,C]) = \ee^{-h^3 C} \ee^{h A + h B} \ee^{h^3 C} + \O{h^5}.\]
Consequently,
\begin{equation}
\label{eq:MagnusAssymetricSplit}
\exp(\Theta_2(t_{n+1},t_n)) = \ee^{-\mu_{1,1}}\ \ee^{-\ii h \mathcal{L}_0 - \ii \mu_{0,0}}\ \ee^{\mu_{1,1}} + \O{h^5},
\end{equation}
where
\begin{Eqnarray}
\label{eq:mu00} \mu_{0,0} & = & \mu_{0,0}^{\mathcal{L}_1}(t_{n+1},t_n) - \mu_{0,0}^{f \circ u^{[k]}}(t_{n+1},t_n)\\
\label{eq:mu11} \mu_{1,1} & = & \mu_{1,1}^{\mathcal{L}_1}(t_{n+1},t_n) - \mu_{1,1}^{f \circ u^{[k]}}(t_{n+1},t_n).
\end{Eqnarray}

The central exponential in \R{eq:MagnusAssymetricSplit} is now free of commutators and can be approximated by using any method for computing the matrix exponential. In particular, any fourth-order splitting can be employed, provided exponentials of $\mathcal{L}_0$, $\mathcal{L}_1$ and $f(v)$ are readily computable: this is contingent upon their structure following discretisation. Due to linearity of the integral, $\mu_{j,k}^{\mathcal{L}_1}$ and $\mu_{j,k}^{f \circ u^{[k]}}$ should have the same structure as $\mathcal{L}_1$ and $f$, respectively.

In the context of cubic NLS and Gross--Pitaevskii equations, where $\mathcal{L}_1(t)$ is a function, we may also approximate the central exponential by utilising {\em compact} fourth-order splittings such as \cite{chin05fsi} and \cite{omelyan2003symplectic} since the first component $\mathcal{L}_0$ is a Laplacian and the second component $\mu_{0,0}$ is a multiplication operator.













\setcounter{equation}{0}
\setcounter{figure}{0}
\section{Convergence}
\label{sec:convergence}

It is convenient to assume in this section that the underlying domain is $\BB{T}^d$, noting that a generalisation to many other boundary condv(zero Dirichlet or Neumann conditions in a compact domain, a Cauchy problem in $\BB{R}^d$) is straightforward. We  further assume in this section that the Hamiltonian can be written in the form $H(u)=\mathcal{G}(u)u$, understanding that, for example,  $\mathcal{G}(u)=-\Delta+\lambda |u|^2$ needs to be taken for the NLS (1.3).

Our starting point is the equation (1.1), where we assume that $u_0\in\CC{H}^s(\BB{T}^d)$ for some $s\in\BB{N}$ and $\mathcal{L}$ acts weakly on $\CC{H}^s(\BB{T}^d)$. We note that the self-adjointness of the Hamiltonian implies that
\begin{equation}
  \label{eq:3.1}
  \Im\langle \mathcal{G}(u)u,u\rangle=0,\qquad u\in \CC{H}^s(\BB{T}^d),
\end{equation}
where $\langle\,\cdot\,,\,\cdot\,\rangle$ is the standard $\CC{L}_2(\BB{T}^d)$ inner product. An immediate consequence is that
\begin{displaymath}
  \frac12 \frac{\D\|u\|^2_{\CC{L}_2}}{\D t}=\Re \langle \partial_t u,u\rangle=-\Re \ii \langle \mathcal{G}(u)u,u\rangle =\Im\langle \mathcal{G}(u)u,u\rangle=0
\end{displaymath}
and the $\CC{L}_2(\BB{T}^d)$ norm is preserved for all $t\geq0$.

The concern of this section is with the convergence of the iterative algorithm (2.2). We let
\begin{displaymath}
  e^{[k]}(x,t)=u^{[k]}(x,t)-u(x,t),\qquad \rho_k(t)=\|e^{[k]}(\,\cdot\,,t)\|_{\CC{L}_2}.
\end{displaymath}

\begin{lemma}
  For every $t\in[t_n,t_{n+1}]$ it is true that
  \begin{equation}
    \label{eq:3.2}
    \rho_{k+1}(t)\leq h\|\mathcal{G}(u^{[k]})-\mathcal{G}(u)\|_{\CC{L}_2}\cdot\,\|u\|_{\CC{L}_2},
  \end{equation}
  where $h=t_{n+1}-t_n$.
\end{lemma}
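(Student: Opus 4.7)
The plan is to run an energy estimate on the error equation. First, by subtracting the exact equation $\ii\partial_t u=\mathcal{G}(u)u$ from the iteration $\ii\partial_t u^{[k+1]}=\mathcal{G}(u^{[k]})u^{[k+1]}$ and adding and subtracting the mixed term $\mathcal{G}(u^{[k]})u$, I would obtain the identity
\begin{equation*}
\ii\partial_t e^{[k+1]}=\mathcal{G}(u^{[k]})\,e^{[k+1]}+\bigl[\mathcal{G}(u^{[k]})-\mathcal{G}(u)\bigr]u,
\end{equation*}
with the side condition $e^{[k+1]}(\cdot,t_n)=0$ inherited from the fact that both $u^{[k+1]}$ and $u$ start from $u(t_n)$ at the beginning of the step.

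Next I would differentiate $\tfrac12\|e^{[k+1]}\|_{\CC{L}_2}^2=\tfrac12\langle e^{[k+1]},e^{[k+1]}\rangle$ in time and take real parts exactly as in the derivation of $\CC{L}_2$-conservation preceding the lemma. The contribution of the first term on the right gives $\Im\langle \mathcal{G}(u^{[k]})e^{[k+1]},e^{[k+1]}\rangle$, which vanishes by the self-adjointness identity \eqref{eq:3.1} applied with $\mathcal{G}(u^{[k]})$ in place of $\mathcal{G}(u)$ and $e^{[k+1]}$ in place of $u$. The second term is handled by Cauchy--Schwarz, producing
\begin{equation*}
\frac{\D}{\D t}\|e^{[k+1]}\|_{\CC{L}_2}\leq \bigl\|\bigl[\mathcal{G}(u^{[k]})-\mathcal{G}(u)\bigr]u\bigr\|_{\CC{L}_2}\leq \bigl\|\mathcal{G}(u^{[k]})-\mathcal{G}(u)\bigr\|_{\CC{L}_2}\,\|u\|_{\CC{L}_2},
\end{equation*}
where the norm on the right is interpreted as the $\CC{L}_2$ operator norm (and, for the NLS case, $\mathcal{G}(u^{[k]})-\mathcal{G}(u)=\lambda(|u^{[k]}|^2-|u|^2)$ is a multiplication operator whose operator norm equals its $\CC{L}_\infty$ norm).

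Finally I would integrate this differential inequality from $t_n$ to $t$, use $\rho_{k+1}(t_n)=0$ and the elementary bound $t-t_n\leq h$, also invoking the $\CC{L}_2$-conservation established above the lemma to replace $\|u(s)\|_{\CC{L}_2}$ by a time-independent quantity. This yields \eqref{eq:3.2}. The only subtle point, and the one I would want to state explicitly, is that self-adjointness of $\mathcal{G}(v)$ as an operator (for each fixed $v$) allows us to apply \eqref{eq:3.1} with the argument $e^{[k+1]}$ rather than with $u^{[k]}$ itself; everything else is a routine Gr\"onwall-type argument with a zero right-hand coefficient, so no exponential factor appears.
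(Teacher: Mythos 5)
Your proposal is correct and follows essentially the same route as the paper: the same error equation, the same energy identity with the self-adjointness relation \eqref{eq:3.1} (applied to $\mathcal{G}(u^{[k]})$ and $e^{[k+1]}$) killing the homogeneous term, and Cauchy--Schwarz plus integration from $t_n$ with $\rho_{k+1}(t_n)=0$. The only cosmetic difference is that you pass to the differential inequality $\rho_{k+1}'\leq\|\mathcal{G}(u^{[k]})-\mathcal{G}(u)\|_{\CC{L}_2}\|u\|_{\CC{L}_2}$ before integrating, whereas the paper integrates $\rho_{k+1}\rho_{k+1}'$ first and applies Cauchy--Schwarz to the resulting integral; both yield \eqref{eq:3.2}.
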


\begin{proof}
  It is an immediate consequence of (1.1) and (2.2) that
  \begin{displaymath}
    \ii \partial_t e^{[k+1]}=\mathcal{G}(u^{[k]})e^{[k+1]}+[\mathcal{G}(u^{[k]})-\mathcal{G}(u)]u.
  \end{displaymath}
  Similarly to \cite{hochbruck03omi}, we observe that
  \begin{displaymath}
    \frac{\D \|e^{[k+1]}\|^2_{\CC{L}_2}}{\D t}=2\|e^{[k+1]}\|_{\CC{L}_2}\frac{\D\|e^{[k+1]}\|_{\CC{L}_2}}{\D t}=2\rho_{k+1}(t)\rho_{k+1}'(t),
  \end{displaymath}
  on the other hand
  \begin{displaymath}
    \frac{\D \|e^{[k+1]}\|^2_{\CC{L}_2}}{\D t}=\frac{\D}{\D t} \langle e^{[k+1]},e^{[k+1]}\rangle=2\Re \left\langle e^{[k+1]},\frac{\D e^{[k+1]}}{\D t}\right\rangle\!.
  \end{displaymath}
  Comparing the two expressions,
  \begin{Eqnarray*}
    \rho_{k+1}'&=&\frac{1}{\rho_{k+1}} \Re \left\langle e^{[k+1]},\frac{\D e^{[k+1]}}{\D t}\right\rangle\\
    &=&\frac{1}{\rho_{k+1}} \Re\left\langle e^{[k+1]},-\ii \mathcal{G}(u^{[k]})e^{[k+1]}-\ii[\mathcal{G}(u^{[k]}-\mathcal{G}(u)]u\right\rangle\!.
  \end{Eqnarray*}
  However, \R{eq:3.1} implies that $\Re \langle e^{[k+1]},\mathcal{G}(u^{[k]})e^{[k+1]}\rangle=0$, therefore
  \begin{displaymath}
    \rho_{k+1}'=-\frac{\ii}{\rho_{k+1}} \langle e^{[k+1]},[\mathcal{G}(u^{[k]}-\mathcal{G}(u)]u\rangle.
  \end{displaymath}
  Moreover, $\rho_{k+1}(t_n)=0$ and integration yields
  \begin{displaymath}
    \frac12 \rho_{k+1}^2=-\ii \int_{t_n}^t \langle e^{[k+1]}(\,\cdot\,,\tau),[\mathcal{G}(u^{[k]}(\,\cdot\,,\tau))-\mathcal{G}(u(\cdot\,,\tau))]u(\,\cdot\,,\tau)\rangle\D\tau
  \end{displaymath}
  for $t\in[t_n,t_{n+1}]$. Simple application of the Cauchy--Schwartz inequality results in \R{eq:3.2} and concludes the proof.
\end{proof}

Our next goal is to identify the {\em order\/} of each iteration, i.e.\ the number (if it exists) $p_k\in\BB{N}$ such that
\begin{displaymath}
  \rho_k(t)\leq c_k h^{p_k+1},\qquad t\in[t_n,t_{n+1}]
\end{displaymath}
for some $c_k>0$. Were we to prove that the $p_k$s form a strictly increasing sequence and the $c_k$s form at most geometrically increasing sequence (i.e., $\limsup_{k\rightarrow\infty} k^{-1}\log c_k<\infty$), the convergence of the iterative scheme (2.2) for sufficiently small $h>0$ will be assured.

We assume that $\mathcal{G}$ is Fr\'echet differentiable and denote its Fr\'echet derivative by $\mathcal{G}'$. Note that, $\mathcal{G}$ being a differential operator, this is a somewhat delicate assumption. For example, for the Schr\"odinger operator (1.3), namely $\mathcal{G}(u)=-\Delta+\lambda|u|^2$, the potential part $\lambda|u|^2$ presents no problem but the kinetic part, $-\Delta$, requires more consideration. The Fr\'echet derivative of $-\Delta$ is the least number $\gamma>0$ such that $\|\Delta v\|_{\CC{L}_2}\leq \gamma\|v\|_{\CC{L}_2}$: this is a consequence of $-\Delta$ being a linear operator. Finding $\gamma$ is easy, provided that everything lives in $\CC{H}^2(\BB{T}^d)$ -- in that case
\begin{displaymath}
  \|\Delta v\|_{\CC{L}_2}^2=\langle \Delta v,\Delta v\rangle_{\CC{L}_2}=\langle v,\Delta^2v\rangle_{\CC{L}_2}\leq \|\Delta^2\|_{\CC{L}_2}\|v\|_{\CC{L_2}}
\end{displaymath}
and maximising over $v\in\CC{H}^2(\BB{T}^d)\setminus\{0\}$ results in $\gamma=\|\Delta^2\|^{1/2}_{\CC{L}_2}$. Taking $\mathcal{H}=\CC{H}^1(\BB{T}^d)$, which is typically sufficient for the consideration of the equation (1.3) in its weak form, falls short of the requirements of our convergence analysis.

Since
\begin{displaymath}
  \|\mathcal{G}(v+\delta)-\mathcal{G}(v)\|_{\CC{L}_2}\leq \mathcal{G}'(v)\|\delta\|_{\CC{L}_2},\qquad v,\delta\in\CC{H}^2(\BB{T}^d),\quad \|\delta\|_{\CC{L}_2}\ll1,
\end{displaymath}
we deduce that
\begin{displaymath}
  \|\mathcal{G}(u^{[k]})-\mathcal{G}(u)\|_{\CC{L}_2}=\|\mathcal{G}(u+e^{[k]})-\mathcal{G}(u)\|_{\CC{L}_2}\leq \mathcal{G}'(u)\rho_k.
\end{displaymath}
It now follows from \R{eq:3.2} by induction that
\begin{displaymath}
  \rho_{k+1}(t)\leq h\mathcal{G}'(u)\|u\|_{\CC{L}_2}\rho_k(t)\leq \cdots\leq h^k \left[\mathcal{G}'(u)\|u\|_{\CC{L}_2}\right]^k \rho_1(t).
\end{displaymath}

\begin{theorem}
  \label{Convergence}
  Given that $\mathcal{G}$ is Fr\'echet differentiable in $\BB{T}^d$, it is true that
  \begin{equation}
     \label{eq:3.3}
     \rho_k(t)\leq \left[\mathcal{G}'(u)\|u\|_{\CC{L}_2}\right]^{k-1} c_1 h^{p_1+k},\qquad k\in\BB{N}
  \end{equation}
  and the method (2.2) converges for $h<[\mathcal{G}'(u)\|u\|_{\CC{L}_2}]^{-1}$.
\end{theorem}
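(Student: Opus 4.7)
The bulk of the argument has in fact already been laid out in the paragraphs preceding the theorem, so my proposal is simply to assemble those ingredients into a two-line induction and then read off the convergence condition.

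First, I would record the base case. The recursion starts at $k=1$, so one needs an estimate of the form $\rho_1(t)\le c_1 h^{p_1+1}$. This is precisely the consistency estimate for a single application of the linearised solver applied to the initial guess $u^{[0]}=u_0$, and its order $p_1$ is whatever the Magnus--Hermite scheme delivers after one pass (in the framework of Algorithm~2.1, $p_1=1$ in the generic case). I would treat this as an input: it is not established inside the theorem, but it is the natural quantity to invoke once the inductive structure is in place.

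Second, the inductive step: the Lemma gives
\begin{displaymath}
  \rho_{k+1}(t)\le h\,\|\mathcal{G}(u^{[k]})-\mathcal{G}(u)\|_{\CC{L}_2}\,\|u\|_{\CC{L}_2},
\end{displaymath}
and Fr\'echet differentiability of $\mathcal{G}$, combined with $u^{[k]}=u+e^{[k]}$ and $\|e^{[k]}\|_{\CC{L}_2}=\rho_k$, supplies
\begin{displaymath}
  \|\mathcal{G}(u^{[k]})-\mathcal{G}(u)\|_{\CC{L}_2}\le \mathcal{G}'(u)\,\rho_k(t).
\end{displaymath}
Iterating the resulting recursion $\rho_{k+1}(t)\le h\,\mathcal{G}'(u)\,\|u\|_{\CC{L}_2}\,\rho_k(t)$ down to $\rho_1$ and inserting the base case yields \R{eq:3.3} at once. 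The convergence claim then follows by observing that when $h\,\mathcal{G}'(u)\,\|u\|_{\CC{L}_2}<1$ the prefactor $[h\,\mathcal{G}'(u)\,\|u\|_{\CC{L}_2}]^{k-1}$ is a vanishing geometric sequence, so $\rho_k(t)\to 0$ uniformly on $[t_n,t_{n+1}]$; equivalently, $G$ is a contraction on a neighbourhood of the fixed point in the sense flagged around \R{eq:functional}.

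The step that needs genuine care — and really the only step that is not bookkeeping — is the Fr\'echet-derivative bound. As the text immediately before the theorem already flags, for $\mathcal{G}(u)=-\Delta+\lambda|u|^2$ the differentiable part is harmless but the $-\Delta$ contribution forces us to interpret $\mathcal{G}'(u)$ as a bound on the $\CC{L}_2\to\CC{L}_2$ operator norm of the linearisation, which requires working in $\CC{H}^2(\BB{T}^d)$ rather than the weak-form space $\CC{H}^1(\BB{T}^d)$. In the proof I would therefore state explicitly that $\rho_k$ is controlled in $\CC{L}_2$ while the Fr\'echet constant is computed against the stronger norm; this is the one place where the estimate is not purely mechanical and where the regularity assumption $u_0\in\CC{H}^s(\BB{T}^d)$ (with $s$ large enough for $-\Delta$ to have a finite operator norm on the relevant ball) is actually used. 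Everything else is an immediate two-line induction.
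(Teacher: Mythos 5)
Your proposal is correct and follows the same route as the paper: the paper's proof is exactly the two-line induction you describe, chaining the Lemma's bound $\rho_{k+1}\le h\|\mathcal{G}(u^{[k]})-\mathcal{G}(u)\|_{\CC{L}_2}\|u\|_{\CC{L}_2}$ with the Fr\'echet estimate $\|\mathcal{G}(u+e^{[k]})-\mathcal{G}(u)\|_{\CC{L}_2}\le\mathcal{G}'(u)\rho_k$ down to the base case $\rho_1\le c_1h^{p_1+1}$, and then noting that the $c_k$ grow at most geometrically so that $h<[\mathcal{G}'(u)\|u\|_{\CC{L}_2}]^{-1}$ gives convergence. Your added remark about measuring $\rho_k$ in $\CC{L}_2$ while computing the Fr\'echet constant in $\CC{H}^2(\BB{T}^d)$ matches the caveat the paper itself places immediately before the theorem.
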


\begin{proof}
  Since $c_k\leq \left[\mathcal{G}'(u)\|u\|_{\CC{L}_2}\right]^{k-1} c_1$, the $c_k$s form a geometric sequence and convergence follows.
\end{proof}

The inductive argument in the theorem can be commenced from $k=0$, whereby $p_0=0$ implies immediately that $p_k\geq k$. In other words, {\em each iteration increases the order by at least one unit!\/} The clear practical implication is that, once the goal is to attain specific temporal order, we need to iterate (2.2) certain number of times, thereby stoping, without any convergence tests.

Note four important points:
\begin{itemize}
\item The notion of order in Theorem~\ref{Convergence} is different from the standard concept of order in time-stepping schemes, which makes sense only in the presence of adequate regularity because the error of time-stepping schemes (at any rate, those that fit into B-series formalism \cite{hairer06gni}) is expressed in terms of high derivatives and elementary differentials. Order in the sense of Theorem~\ref{Convergence}, though, is independent of regularity, as long as the ambient space is smooth enough for $\mathcal{G}'(u)$ to be bounded.
\item The operator $\mathcal{G}$ need not be bounded: this is precisely the reason why we employ Fr\'echet derivatives (which remain bounded locally, for specific function $u$), rather than operator norms.
\item The statement of Theorem \ref{Convergence} is about an {\em exact\/} solution of linear equations in (2.2), while practical computation requires the use of numerical methods, of the kind to which we have alluded in Section~1. The order of the method, however, remains the same as long as the order of the `linear' method used in solving for $u^{[k]}$, is at least $p_k$.
\item Since in general the higher the order of a time-stepping method, the greater its computational cost, we conclude that it is considerably more efficient to vary the order of the `linear' method in the course of the iteration: done right, this reduces the cost without affecting the rate of convergence.
\end{itemize}

However, for a large set of problems (1.1) we can do better than that! Suppose thus that $\mathcal{G}$ is a function of $|u|$ only -- with a minor abuse of notation we write $H(u)=\mathcal{G}(|u|)u$. As in (1.2), $H(v)=\mathcal{L}+f$, except that now, abusing notation again, $f=f(|v|)$. For a want of a better name, we might term such problems as being of {\em Schr\"odinger type.\/}

We solve the equation
\begin{equation}
  \label{eq:3.4}
  \ii\partial_t u=\mathcal{L} u+f(|u|)u,\qquad u(x,t_n)=u^{[0]}(x,t_n),
\end{equation}
using the {\em Strang splitting\/} \cite{hairer06gni},
\begin{Eqnarray}
  \nonumber
  \ii \partial_t v_1&=&\mathcal{L}v_1,\qquad\qquad\, v_1(x,t_n)=u^{[0]}(x,t_n),\\
  \label{eq:3.5}
  \ii\partial v_2&=&f(|v_2|)v_2,\qquad v_2(x,t_n)=v_1(x,t_n+\Frac12 h),\\
  \nonumber
  \ii\partial_t v_3&=&\mathcal{L}v_3,\qquad\qquad\, v_3(x,t_n+\Frac12 h)=v_2(x,t_n+\Frac12 h),\\
  \nonumber
  u(x,t_{n+1})&=&v_3(x,t_{n+1}).
\end{Eqnarray}
The second step in the Strang splitting is an ODE. Consider though the solution of the ODE
\begin{displaymath}
  \ii y'=f(|y_0|)y,\qquad y(t_n)=y_0
\end{displaymath}
-- this is a linear equation whose exact solution is $y(t)=\exp(-\ii t f(|y_0|))y_0$. In particular, $|y(t)|\equiv |y_0|$, therefore $f(|y|)=f(|y_0|)$ and the solution of the linear ODE is also the solution of
\begin{displaymath}
  \ii y'=f(|y|)y,\qquad y(t_n)=y_0.
\end{displaymath}
In other words, the solution of \R{eq:3.5} is {\em identical\/} to that of
\begin{Eqnarray}
  \nonumber
  \ii \partial_t v_1&=&\mathcal{L}v_1,\qquad\qquad\qquad\quad\! v_1(x,t_n)=u^{[0]}(x,t_n),\\
  \label{eq:3.6}
  \ii\partial v_2&=&f(|v_2(x,t_n)|)v_2,\qquad v_2(x,t_n)=v_1(x,t_n+\Frac12 h),\\
  \nonumber
  \ii\partial_t v_3&=&\mathcal{L}v_3,\qquad\qquad\qquad\quad\! v_3(x,t_n+\Frac12 h)=v_2(x,t_n+\Frac12 h),\\
  \nonumber
  u(x,t_{n+1})&=&v_3(x,t_{n+1}).
\end{Eqnarray}
The system \R{eq:3.6}, however, is exactly what we obtain by applying Strang splitting in the first step of the iterative algorithm (2.2), applied to the PDE \R{eq:3.4}. To rephrase, $u^{[1]}$ is nothing else but the Strang-splitting solution of \R{eq:3.4}. Since Strang splitting is a second-order method, we deduce that $p_1=2$. This explains why in the proof of Theorem~\ref{Convergence} we commenced the iterative argument from $k=1$ and it is worthwhile to formulate the new result as a theorem.

\begin{theorem}
   \label{Convergence1}
   Let $H(u)=\mathcal{G}(|u|)u$, where $\mathcal{G}(v)=\mathcal{L}+f(v)$, $\mathcal{L}$ is a self-adjoint linear differential operator and $f$ a real-valued scalar function. Provided that the first step of the algorithm (2.2) is solved by the Strang splitting, it is true that $p_k=k+1$, $k\in\BB{N}$ and in place of \R{eq:3.3} we have the error bound
   \begin{displaymath}
     \rho_k(t)\leq \left[\mathcal{G}'(u)\|u\|_{\CC{L}_2}\right]^{k-1} c_1 h^{k+2},\qquad k\in\BB{N}.
   \end{displaymath}
   As before, convergence is assured provided that $h<[\mathcal{G}'(u)\|u\|_{\CC{L}_2}]^{-1}$.
\end{theorem}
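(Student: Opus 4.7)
The plan is to piggyback on the inductive machinery already established in Theorem \ref{Convergence} and merely upgrade the base case from $p_0=0$ to $p_1=2$. The essential content is to show that when $\mathcal{G}(v)=\mathcal{L}+f(v)$ with $f$ real-valued and $H(u)=\mathcal{G}(|u|)u$, the very first iterate $u^{[1]}$ produced by applying Strang splitting to (2.2) coincides with the Strang-splitting approximation of the \emph{full} nonlinear equation \R{eq:3.4}. Once this identification is in hand, the standard second-order accuracy of Strang splitting immediately gives $\rho_1(t)\leq c_1 h^3$, i.e., $p_1=2$.

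To establish the identification I would lean on the pointwise ODE observation recorded just before the theorem statement: for real-valued $f$, the nonlinear ODE $\ii y'=f(|y|)y$ with initial value $y_0$ admits the closed-form solution $y(t)=\exp(-\ii t f(|y_0|))y_0$, because its frozen-potential linearisation $\ii y'=f(|y_0|)y$ automatically preserves $|y|$. Hence the middle (nonlinear) substep of Strang applied to \R{eq:3.4} produces exactly the same flow as the middle (linear, frozen-potential) substep obtained when we instead apply Strang to the first linearised equation of (2.2) -- in which the potential is frozen at $u^{[0]}$. The two outer kinetic half-steps driven by $\mathcal{L}$ are manifestly identical, so the two three-step compositions agree and $u^{[1]}$ is literally the Strang-splitting approximation of \R{eq:3.4}, which is known to be of order $2$.

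With the improved base case, the induction of Theorem \ref{Convergence} applies verbatim from $k=1$ onwards. The recursion inherited from Lemma~1,
\begin{displaymath}
\rho_{k+1}(t)\leq h\,\mathcal{G}'(u)\,\|u\|_{\CC{L}_2}\,\rho_k(t),
\end{displaymath}
iterated from $\rho_1(t)\leq c_1 h^3$, yields $\rho_k(t)\leq [\mathcal{G}'(u)\|u\|_{\CC{L}_2}]^{k-1} c_1 h^{k+2}$, which is the advertised bound with $p_k=k+1$. Convergence under $h<[\mathcal{G}'(u)\|u\|_{\CC{L}_2}]^{-1}$ then follows from the geometric decay of the sequence of amplification constants, exactly as in the proof of Theorem \ref{Convergence}.

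The only delicate step is the first identification, and its pivot is the modulus conservation $|y(t)|\equiv|y_0|$ for the middle ODE substep. This relies essentially on $f$ being real-valued and $\mathcal{G}$ depending on $u$ only through $|u|$; without both assumptions the frozen-potential ODE used inside (2.2) would no longer agree with the genuine nonlinear flow, the gain from $p_1=1$ to $p_1=2$ would be lost, and we would recover only the weaker estimate of Theorem \ref{Convergence}.
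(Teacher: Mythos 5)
Your proposal is correct and follows essentially the same route as the paper: the key step in both is the observation that, for real-valued $f$ depending only on $|u|$, the middle substep of Strang splitting preserves the modulus, so the frozen-potential linearised flow coincides with the genuine nonlinear flow and hence $u^{[1]}$ is exactly the Strang-splitting approximation of \R{eq:3.4}, giving $p_1=2$. The subsequent iteration of the recursion from Lemma~1 starting at $k=1$ is precisely how the paper obtains the stated bound and convergence condition.
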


\setcounter{equation}{0}
\setcounter{figure}{0}
\section{Preservation of the $\CC{L}_2$ norm, momentum and Hamiltonian energy in cubic \schr equation}
\label{sec:energy}
As in the heart of geometric numerical integration lays preservation of qualitative behaviour of solutions, in this section we are concerned with the preservation of major qualitative features of the nonlinear \schr equation. Specifically, we wish to ascertain the preservation of $\CC{L}_2$ , momentum and Hamiltonian energy  under  iterated linearization, as described in subsection \ref{sec:algorithm}, once  they are applied to equation (\ref{eq:NLS}).

Throughout this section we assume that the NLS (\ref{eq:NLS}) is solved in a cube in $\BB{R}^d$ with periodic boundary conditions. A generalisation to zero Dirichlet or  Neumann boundary conditions is straightforward. 

\subsection{$\CC{L}_2$ energy preservation}

The $\CC{L}_2$ energy of the cubic NLS equation (\ref{eq:dispersive}) with periodic boundary conditions is conserved and a trivial -- yet pleasing -- feature of our method is that so does its numerical solution by means of the method (\ref{eq:IterativeScheme}). 

\begin{theorem}
  \label{Conserve_mass}
  Subject to periodic boundary conditions it is true that $\|u^{[k]}(t)\|=\|u_0(t)\|$ and the method (\ref{eq:IterativeScheme}) preserves $\CC{L}_2$ energy.
\end{theorem}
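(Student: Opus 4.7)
The plan is to observe that the entire $L_2$-conservation argument at the start of Section~3 carries over verbatim to each iterate $u^{[k+1]}$, because once $u^{[k]}$ is frozen, the evolution equation defining $u^{[k+1]}$ is a \emph{linear} Schr\"odinger equation with a \emph{self-adjoint} Hamiltonian. Concretely, for the cubic NLS the scheme \R{eq:IterativeScheme} reads
\[
  \ii\partial_t u^{[k+1]} = \bigl(-\Delta+\lambda|u^{[k]}|^2\bigr)u^{[k+1]}, \qquad u^{[k+1]}(0)=u_0,
\]
and under periodic boundary conditions on a cube the Laplacian is self-adjoint, while $\lambda|u^{[k]}|^2$ is a pointwise multiplication by a real-valued function, hence also self-adjoint at each fixed $t$. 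Thus the operator $\mathcal{G}(u^{[k]}(\cdot,t)) := -\Delta+\lambda|u^{[k]}(\cdot,t)|^2$ satisfies the analogue of \R{eq:3.1}, namely $\Im\langle \mathcal{G}(u^{[k]})w,w\rangle=0$ for every admissible $w$.

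Next I would reproduce the one-line calculation that opens Section~3, but applied to $u^{[k+1]}$ instead of $u$:
\[
  \frac12\frac{\D}{\D t}\|u^{[k+1]}\|^2_{\CC{L}_2}
  = \Re\langle u^{[k+1]},\partial_t u^{[k+1]}\rangle
  = -\Re\,\ii\langle u^{[k+1]},\mathcal{G}(u^{[k]})u^{[k+1]}\rangle
  = \Im\langle u^{[k+1]},\mathcal{G}(u^{[k]})u^{[k+1]}\rangle
  = 0.
\]
Hence $\|u^{[k+1]}(\cdot,t)\|_{\CC{L}_2}$ is constant in $t$, and in particular equals its value at $t=0$, which is $\|u_0\|_{\CC{L}_2}$ because each iterate inherits the initial condition $u_0$ from the scheme \R{eq:IterativeScheme}.

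The proof is then closed by a trivial induction on $k$: the base case $u^{[0]}\equiv u_0$ is immediate, and the inductive step is precisely the conservation identity above, which requires no assumption on $u^{[k]}$ beyond being real-valued inside $|\cdot|^2$, a property that holds unconditionally. The same reasoning applies, \emph{mutatis mutandis}, if the iteration is restarted on each subinterval $[t_n,t_{n+1}]$ from a numerically computed $u_n$, as in Algorithm~\ref{alg:GP}.

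There is essentially no analytical obstacle; the only point deserving care is to emphasise that self-adjointness of $\mathcal{G}(u^{[k]})$ is inherited from the \emph{realness} of $|u^{[k]}|^2$ and does not require $u^{[k]}$ itself to satisfy any conservation law, so the induction does not compound errors from one iterate to the next. The result is therefore structural: it holds at the level of the exact flow of each linearised equation, independently of the order of the iteration. (A parallel caveat, to be mentioned in the subsequent sections on momentum and Hamiltonian energy, is that this exact-flow statement says nothing yet about the discretised scheme; preservation after quadrature and matrix-exponential approximation would need a separate argument.)
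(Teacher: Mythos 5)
Your proposal is correct and rests on exactly the same observation as the paper's proof: each iterate solves a linear Schr\"odinger equation with a real (hence self-adjoint) frozen potential, whose flow preserves the $\CC{L}_2$ norm, and the claim follows by induction on $k$. You simply make explicit the one-line energy computation and the caveat about discretisation that the paper states only in the remarks following the theorem.
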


\begin{proof}
  Each step of  (\ref{eq:IterativeScheme}) involves a solution of a linear Schr\"odinger equation (\ref{eq:lin_NLS}), which itself preserves $\CC{L}_2$ energy subject to the above boundary conditions. This remains true once the solution of (\ref{eq:lin_NLS}) is itself computed by the methods of Section~2, because they also preserve $\CC{L}_2$ energy.
\end{proof}

Realistic application of Theorem~4 requires for the linear equation to be discretised and, to retain the spirit of the theorem,  the linearised equation must discretised by a time-stepping method that respects $\CC{L}_2$ norm conservation, e.g.\ the methods from \cite{iserles18mlm,iserles19sse}.

One crucialconsequence of the theorem is that the iterated linearisation \R{eq:IterativeScheme} is stable, provided that the linear equation is solved stably. 

\subsection{Conservation of momentum}

Given boundary conditions either in an interval  with periodic boundary conditions or in $\BB{R}$, the NLS (\ref{eq:dispersive}) conserves the momentum
\begin{displaymath}
  I(u)=\ii \int [\bar{u}_x(x) u(x)-\bar{u}(x) u_x(x)]\D x
\end{displaymath}
\cite{killip09cns}. Therefore, in a periodic setting in $\BB{T}_1$,
\begin{Eqnarray*}
  &&\frac{\D}{\D t}I(u^{[k]})\\
  &=&\ii \int_{\bb{T}^1} (\bar{u}^{[k]}_{xt} u^{[k]}+\bar{u}^{[k]}_x u_t^{[k]} -\bar{u}_t^{[k]} u_x-\bar{u}^{[k]}u_{xt}^{[k]})\D x\\
  &=&\ii\int_{\bb{T}^1} \{\ii[-u_{xx}^{[k]}+(\bar{u}_{xx}^{[k]}+\bar{u}_x^{[k-1]}u^{[k-1]}\bar{u}^{[k]} +\bar{u}^{[k-1]} u_x^{[k-1]}\bar{u}^{[k]}+|u^{[k-1]}|^2\bar{u}^{[k]}_x] u^{[k]}\\
  &&\hspace*{10pt}\mbox{}+\ii\bar{u}_x^{[k]}(u_x^{[k]}-|u^{[k-1]}|^2u^{[k]}-(-\bar{u}^{[k]}_x+|u^{[k-1]}|^2\bar{u}^{[k]})u^{[k]}_x\\
  &&\hspace*{10pt}\mbox{}-\ii \bar{u}^{[k]}[u_{xx}^{[k]}-(-u_{xx}^{[k]}+\bar{u}_x^{[k-1]}u^{[k-1]}u^{[k]} +\bar{u}^{[k-1]}u_x^{[k-1]}u^{[k]}+|u^{[k-1]}|^2 u_x^{[k]}]\}\D x\\
  &=&\int_{\bb{T}^1} (\bar{u}^{[k]}_{xx}u^{[k]}-\bar{u}^{[k]}u_{xx}^{[k]})\D x
 \end{Eqnarray*}
 and, integrating by parts,
\begin{displaymath}
  \frac{\D}{\D t}I(u^{[k]})=\int_{\bb{T}^1} (\bar{u}_x^{[k]}u_x^{[k]}- \bar{u}_x^{[k]}u_x^{[k]})\D x=0.
\end{displaymath}

This can be easily generalised componentwise to $\BB{T}^d$ or $\BB{R}^d$ and we deduce that

\begin{theorem}
 The method \R{eq:IterativeScheme} conserves momentum.
\end{theorem}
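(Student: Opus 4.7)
The proof plan follows the standard infinitesimal strategy for conservation laws: show $\tfrac{\mathrm{d}}{\mathrm{d}t} I(u^{[k]}) = 0$ at every $t$ inside a time step. I would start in the one-dimensional periodic setting (as in the excerpt), and then indicate extension to $\mathbb{T}^d$ or $\mathbb{R}^d$ by applying the argument componentwise.

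First, differentiate the functional $I(u^{[k]}) = \ii \int_{\mathbb{T}^1}[\bar u^{[k]}_x u^{[k]} - \bar u^{[k]} u^{[k]}_x]\,\mathrm{d}x$ under the integral, producing four terms containing $\partial_t u^{[k]}$, $\partial_t \bar u^{[k]}$, $\partial_t u^{[k]}_x$, and $\partial_t \bar u^{[k]}_x$. Substitute from the linearised equation $\ii \partial_t u^{[k]} = -\Delta u^{[k]} + V u^{[k]}$ with $V = \lambda |u^{[k-1]}|^2$ (and its conjugate and spatial derivative), taking advantage of the fact that $V$ is real-valued so that $\partial_t \bar u^{[k]}$ has the mirrored form.

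Next, split the resulting integrand into a \emph{kinetic} part coming from $-\Delta$ and a \emph{potential} part coming from $V$. For the kinetic part, repeated integration by parts collapses everything to a perfect derivative $\partial_x(\bar u^{[k]}_x u^{[k]}_x)$, which integrates to zero by periodicity. For the potential part, expand $V_x = \bar u^{[k-1]}_x u^{[k-1]} + \bar u^{[k-1]} u^{[k-1]}_x$ and track the four separate contributions; here the goal is to see that the reality of $V$ plus the specific shape of the four $\bar u^{[k]}$--$u^{[k]}$ cross-terms produces total cancellation. The extension to higher dimensions is then straightforward: define the momentum component $I_j(u)$ using $\partial_{x_j}$ in place of $\partial_x$, repeat the argument in coordinate $j$, and note that mixed-coordinate derivatives integrate away via periodicity.

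The principal obstacle is the potential-term cancellation. In the self-consistent NLS the identical argument works because $V = |u|^2$ uses the same iterate as the equation being differentiated, so terms of the form $\int V_x |u|^2\,\mathrm{d}x$ collapse via $V_x |u|^2 = \tfrac12 \partial_x |u|^4$. In our iterative scheme the potential $V = \lambda|u^{[k-1]}|^2$ uses the \emph{previous} iterate while the state being propagated is $u^{[k]}$, which breaks this self-consistency. The delicate step is therefore to verify that, despite the different iterates, the four contributions from $\bar u^{[k]}_x \partial_t u^{[k]}$, $\bar u^{[k]}\partial_t u^{[k]}_x$, $\partial_t \bar u^{[k]}_x \cdot u^{[k]}$ and $\partial_t \bar u^{[k]}\cdot u^{[k]}_x$ regroup so that all $V$-dependent contributions cancel; if any residual term of the form $\int \partial_x(|u^{[k-1]}|^2)|u^{[k]}|^2\,\mathrm{d}x$ survives, additional structural input would be required to close the argument. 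This is exactly the step on which I would spend most effort in a rigorous write-up.
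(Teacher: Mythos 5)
Your strategy is the same as the paper's: differentiate $I(u^{[k]})$ under the integral, substitute the linearised equation, and deal with the kinetic and potential contributions separately. The kinetic part goes through exactly as you describe. But the worry you voice about the potential part is not a technicality to be polished away in the write-up --- it is the crux, and the cancellation you hope for does \emph{not} occur. Writing $V=\lambda|u^{[k-1]}|^2$ (real) and $u=u^{[k]}$, careful bookkeeping of the four terms gives
\begin{displaymath}
  \frac{\mathrm{d}}{\mathrm{d}t}I(u^{[k]})
  =\int\bigl(\bar{u}_{xxx}u+\bar{u}u_{xxx}\bigr)\,\mathrm{d}x
   -2\int V_x\,|u|^2\,\mathrm{d}x .
\end{displaymath}
The first integral vanishes (it equals $A+\bar{A}$ with $A=\int\bar{u}_{xxx}u\,\mathrm{d}x$, and three integrations by parts give $A=-\bar{A}$), but the second is precisely the residual term you feared, $-2\lambda\int\partial_x\bigl(|u^{[k-1]}|^2\bigr)\,|u^{[k]}|^2\,\mathrm{d}x$. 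The two $V_x|u|^2$ contributions --- one from $\bar{u}^{[k]}_{xt}u^{[k]}$, one from $-\bar{u}^{[k]}u^{[k]}_{xt}$ --- enter with the \emph{same} sign and add; only the $V\bar{u}_xu$ and $V\bar{u}u_x$ cross-terms cancel. This is just Ehrenfest's theorem: the $k$th iterate solves a linear Schr\"odinger equation with external potential $\lambda|u^{[k-1]}|^2$, and a linear Schr\"odinger equation with a non-constant potential does not conserve momentum. Self-consistency ($V=\lambda|u|^2$ with the \emph{same} $u$) is exactly what turns $\int V_x|u|^2\,\mathrm{d}x$ into $\frac{\lambda}{2}\int\partial_x\bigl((|u|^2)^2\bigr)\mathrm{d}x=0$ for the true NLS, and it is genuinely lost in the iteration.

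So the argument cannot be closed as stated: exact momentum conservation fails for the iterates in general, and the paper's own proof asserts the very cancellation you were doubting. Your instinct to distrust it was correct. What survives is weaker: since $u^{[k-1]}-u^{[k]}$ is small in $h$ by the convergence estimates of Section~3, the residual $-2\lambda\int\partial_x(|u^{[k-1]}|^2)|u^{[k]}|^2\,\mathrm{d}x$ is of the order of the scheme, so momentum is conserved only up to that order; alternatively, exact conservation holds under additional symmetry (e.g.\ even initial data, for which every iterate remains even and the residual vanishes by parity --- which is essentially what the pure-NLS numerical example tests). If you want a correct statement, prove one of these weaker versions rather than trying to force the cancellation.
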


\subsection{Hamiltonian energy}

Hamiltonian energy of cubic  \schr equation in our setting is given by
\begin{equation}\label{energy_1}
\mathcal{H}(u)=\frac{1}{(2\pi)^d}\int_{\bb T^d}\left[\|{\nabla}u\|_{\CC{L}_2}^2+\frac12 \lambda |u|^4\right]\D x=\langle -\Delta u,u\rangle+\frac{\lambda}{2}
\langle |u|^2u,u\rangle
\end{equation}
and is preserved in time \cite{faou12gni}. For completeness, we prove this statement: it is enough to show that its time derivative 
$
\frac{\D}{\D t}\mathcal{H}(u)=\frac{\D}{\D t}\langle -\Delta  u,u \rangle + \frac{\D}{\D t}\frac{1}{2} \langle |u|^2u,u \rangle
$
equals  zero. Subject to the notational convention that $\dot{u}=\frac{\D}{\D t}u$ we observe that
\begin{align}\label{kinetic}
\frac{\D}{\D t}\langle -\Delta  u,u \rangle &= -\langle \Delta  \dot{u},u \rangle -\langle \Delta  u,\dot{u} \rangle = -\langle   \dot{u}, \Delta u \rangle -\overline{\langle    \dot{u} , \Delta u \rangle}=-2{\rm Re}\langle \dot{u}, \Delta u  \rangle \\ \nonumber
&=-2{\rm Re}\langle  \ii \Delta u -\ii |u|^2 u, \Delta u  \rangle=2{\rm Re}\langle  \ii |u|^2 u, \Delta u  \rangle \\
\label{potential}
\frac{1}{2}\frac{\D}{\D t} \langle |u|^2u,u \rangle &= \frac{1}{2}(\langle \dot{u}\bar{u}u,u \rangle+\langle u\dot{\bar{u}}u,u \rangle+\langle u\bar{u}\dot{u},u \rangle+\langle u\bar{u}u,\dot{u} \rangle) \\ \nonumber
&=\frac{1}{2}(2\langle \dot{u}\bar{u}u,u \rangle+2\langle u\bar{u}u,\dot{u} \rangle)=2\langle \dot{u},u\bar{u}u\rangle+\langle u\bar{u}u,\dot{u} \rangle\\\nonumber
&=\langle \dot{u},u|u|^2 \rangle+\langle u|u|^2,\dot{u} \rangle \\ \nonumber
&=2{\rm Re}\langle u|u|^2,\dot{u} \rangle=2{\rm Re}\langle u|u|^2,\ii\Delta u-\ii|u|^2u \rangle=2{\rm Re}\langle u|u|^2,\ii\Delta u \rangle\\
\nonumber
&=-2{\rm Re}\langle \ii u|u|^2,\Delta u \rangle
\end{align}

It is evident from (\ref{kinetic}) and (\ref{potential}) that $\frac{\D}{\D t}\mathcal{H}(u)=0$ so the Hamiltonian energy of cubic \schr equation is preserved. The same cannot be concluded for $u^{[k]}$, the solution of $k$th linearised iteration
\begin{equation}
  \label{eq:linearized}
  \ii u^{[k]}_t=-\Delta u^{[k]}+\lambda |u^{[k-1]}|^2u^{[k]}.
\end{equation}
Indeed, given that the Hamiltonian of the linear Schr\"odinger equation $\ii u_t=-\Delta u+V(x)u$ is
\begin{displaymath}
  \int_{\bb{T}^d} \left[\|\nabla u\|^2_{\CC{L}_2} +V(x)|u|^2\right]\!\D x,
\end{displaymath}
all we can deduce from (\ref{eq:linearized}) is that
\begin{displaymath}
  \int_{\bb{T}^d} \left[\|\nabla u^{[k]}\|_{\CC{L}_2}^2+\lambda |u^{[k-1]}|^2|u^{[k]}|^2\right]\!\D x
\end{displaymath}
is conserved under the flow and, at a first glance, a factor of $\frac12$ is missing. Fortunately, it is recovered once $k$ tends to infinity and the following theorem establishes the rate of convergence.

\begin{theorem}
 Once iterated linearisation is applied to the NLS (\ref{eq:NLS}), it is true that  
 \begin{equation}
  \label{eq:Hammy}
  \mathcal{H}(u^{[k]})-\mathcal{H}(u)=\O{h^{k+1}},\qquad k\in\BB{N}.
\end{equation}
\end{theorem}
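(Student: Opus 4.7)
The plan is to expand the Hamiltonian difference as a polynomial in the error $e^{[k]}=u^{[k]}-u$ and then bound each term using the $\CC{L}_2$ convergence estimate $\rho_k(t)=\|e^{[k]}\|_{\CC{L}_2}=\O{h^{k+1}}$ already proven in Theorem~\ref{Convergence}. Since $\mathcal{H}$ is a polynomial functional (quadratic in the kinetic part, quartic in the nonlinear part), writing $u^{[k]}=u+e^{[k]}$ produces a finite Taylor expansion
\begin{displaymath}
\mathcal{H}(u^{[k]})-\mathcal{H}(u) = 2\Re\langle -\Delta u+\lambda|u|^2 u, e^{[k]}\rangle + \langle -\Delta e^{[k]},e^{[k]}\rangle + \frac{\lambda}{2}\mathcal{Q}(u,e^{[k]}),
\end{displaymath}
where $\mathcal{Q}$ collects all monomials of degree $\geq 2$ in $e^{[k]}$ arising from expanding $|u+e^{[k]}|^4-|u|^4-4\Re(|u|^2u\,\overline{e^{[k]}})$. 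My goal is to show that every term on the right-hand side is $\O{h^{k+1}}$.

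For the linear term in $e^{[k]}$, a direct application of the Cauchy--Schwarz inequality yields $|\langle -\Delta u+\lambda|u|^2 u, e^{[k]}\rangle|\leq (\|\Delta u\|_{\CC{L}_2}+|\lambda|\,\|u\|_{\CC{L}_\infty}^2\|u\|_{\CC{L}_2})\rho_k = \O{h^{k+1}}$, using regularity of $u$ (at least $\CC{H}^2$ and Sobolev embedding $\CC{H}^s\hookrightarrow\CC{L}_\infty$ for $s>d/2$, both of which are already required in Section~3). For each monomial in $\mathcal{Q}$, say $\int u^{a}\bar u^{b}(e^{[k]})^{c}(\overline{e^{[k]}})^{d}\D x$ with $c+d\geq 2$, we use H\"older's inequality, pulling out $\|u\|_{\CC{L}_\infty}^{a+b}$ and $\|e^{[k]}\|_{\CC{L}_\infty}^{c+d-1}$ (bounded uniformly by Sobolev embedding and the same $\CC{H}^s$ regularity assumption on $u^{[k]}$), leaving a single factor of $\|e^{[k]}\|_{\CC{L}_1}\leq|\BB{T}^d|^{1/2}\|e^{[k]}\|_{\CC{L}_2}$, again $\O{h^{k+1}}$.

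The main obstacle is the purely kinetic quadratic term $\langle -\Delta e^{[k]},e^{[k]}\rangle = \|\nabla e^{[k]}\|_{\CC{L}_2}^2$, since this is naturally an $\CC{H}^1$ quantity whereas Theorem~\ref{Convergence} only supplies an $\CC{L}_2$ estimate. My remedy is to integrate by parts once more and invoke Cauchy--Schwarz in the form
\begin{displaymath}
\|\nabla e^{[k]}\|_{\CC{L}_2}^2 = \langle -\Delta e^{[k]},e^{[k]}\rangle \leq \|\Delta e^{[k]}\|_{\CC{L}_2}\,\|e^{[k]}\|_{\CC{L}_2} \leq \bigl(\|\Delta u\|_{\CC{L}_2}+\|\Delta u^{[k]}\|_{\CC{L}_2}\bigr)\rho_k,
\end{displaymath}
which is $\O{h^{k+1}}$ provided $u^{[k]}$ inherits uniform $\CC{H}^2$ bounds from $u$. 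This uniform $\CC{H}^2$ control of the iterates is the one ingredient not already established: it should follow by a standard energy estimate for the linearised Schr\"odinger equation \R{eq:lin_NLS} (whose potential $\lambda|u^{[k-1]}|^2$ is bounded together with its first two derivatives, propagating $\CC{H}^2$ bounds in time), combined with an induction on $k$. Once this regularity is in hand, collecting the three contributions gives $\mathcal{H}(u^{[k]})-\mathcal{H}(u)=\O{h^{k+1}}$ as claimed.
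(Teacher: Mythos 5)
Your proof is correct in outline but follows a genuinely different route from the paper's. You expand $\mathcal{H}(u+e^{[k]})-\mathcal{H}(u)$ directly as a polynomial in $e^{[k]}$ and estimate each monomial, which forces you to confront the quadratic kinetic term $\langle-\Delta e^{[k]},e^{[k]}\rangle=\|\nabla e^{[k]}\|^2_{\CC{L}_2}$; you dispose of it by a further integration by parts at the cost of a uniform $\CC{H}^2$ bound on the iterates. The paper instead first rewrites $\mathcal{H}(u)=\langle \ii u_t-\frac{\lambda}{2}|u|^2u,u\rangle$ using the PDE, so that the Laplacian never acts on the error: spatial derivatives of $e^{[k]}$ are traded for time derivatives, and the resulting $\langle u_t,u\rangle$ term is killed by $\CC{L}_2$ conservation. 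Your version is arguably the more transparent of the two --- it makes explicit exactly which norms of $u$ and $u^{[k]}$ are needed and where --- whereas the paper's substitution hides the cost in the time-derivative terms (its displayed expansion in fact appears to drop $\langle \ii e^{[k]}_t,u+e^{[k]}\rangle$ without comment, and bounding that term would require comparable regularity of the iterates anyway).

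The one obligation you leave open is the uniform-in-$k$ bound on $\|\Delta u^{[k]}\|_{\CC{L}_2}$ (and on $\|u^{[k]}\|_{\CC{L}_\infty}$ for your H\"older estimates of $\mathcal{Q}$). You identify this correctly, and your sketch --- propagation of $\CC{H}^2$ regularity by the linear Schr\"odinger flow \R{eq:lin_NLS} with a potential controlled in $W^{2,\infty}$, plus induction over the finite number of iterations per step --- does close it; it is also consistent with the higher regularity the paper assumes implicitly throughout Section~\ref{sec:convergence}. So this is a flagged, closable gap rather than a flaw: with that auxiliary regularity lemma supplied, every term in your decomposition is $\O{h^{k+1}}$ by the $\CC{L}_2$ estimate $\rho_k=\O{h^{k+1}}$, and the argument is complete.
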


\begin{proof}
  We commence by recalling from Theorem 3 that $e^{[k]}(t)=u^{[k]}(t)-u(t)=\O{h^{k+1}}$ and observing that (\ref{energy_1}) is equivalent to
  \begin{displaymath}
    \mathcal{H}(u)=\left\langle \ii u_t-\frac{\lambda}{2}|u|^2u,u\right\rangle\!.
  \end{displaymath}
  Moreover,
  \begin{Eqnarray*}
    \mathcal{H}(u^{[k]})&=&\left\langle \ii u^{[k]}_t-\frac{\lambda}{2}|u^{[k]}|^2u^{[k]},u^{[k]}\right\rangle\\
    &=&\left\langle \ii u_t+\ii e^{[k]}_t-\frac{\lambda}{2}|u+e^{[k]}|^2(u+e^{[k]}),u+e^{[k]}\right\rangle
  \end{Eqnarray*}
  Since $|u+e^{[k]}|^2=|u|^2+2\Re u\bar{e}^{[k]}+\O{h^{2k+2}}$, we have
  \begin{Eqnarray*}
    \mathcal{H}(u^{[k]})&=&\mathcal{H}(u) +\left\langle \ii u_t-\frac{\lambda}{2}|u|^2u,e^{[k]}\right\rangle\\
    &&\mbox{}+\left\langle \ii u_t-\lambda (\Re u\bar{e}^{[k]})(u+e^{[k]})-\frac{\lambda}{2}|e^{[k]}|^2(u+e^{[k]}),u\right\rangle+\O{h^{2k+2}}.
  \end{Eqnarray*}
  Therefore
  \begin{Eqnarray*}
    \left|\mathcal{H}(u^{[k]})-\mathcal{H}(u) \right|&\leq& \left\|\ii u_t-\frac{\lambda}{2}|u|^2u\right\| \|e^{[k]}\|+|\lambda|\|u+e^{[k]}\| |\Re u\bar{e}^{[k]}|\\
    &&\mbox{}+\frac{|\lambda|}{2} \|u\|\cdot\|u+e^{[k]}\|\cdot\|e^{[k]}\|^2+|\langle u_t,u\rangle|+\O{h^{k+1}},
  \end{Eqnarray*}
  since
  \begin{displaymath}
    \langle u_t,u\rangle=\frac12 \partial_t \|u\|^2=0,
  \end{displaymath}
  because of the $\CC{L}_2$ energy conservation. Thus, $\|e^{[k]}\|=\O{h^{k+1}}$ implies  (\ref{eq:Hammy}) and we are done.
\end{proof}

\setcounter{equation}{0}
\setcounter{figure}{0}
\section{Numerical examples}
\label{sec:numerics}
In this section we present numerical evidence to
support the theoretical developments.
Our approach of iterative linearisation using Magnus expansions in conjunction with Hermite quadratures is denoted by MH throughout this section.
We combine it with two different methods for computing the matrix exponential: splitting methods and Krylov subspace methods.

\subsection{MH with splitting methods}



In this section, we consider splitting methods for the exponentiation of the linearised Magnus expansion. We note that {\em classical} splitting methods such as the Strang splitting \cite{strang1968construction} and Blanes--Moan splittings \cite{blanes2002practical}, referred to as BM in this section, can be applied directly to nonlinear and non-autonomous equations.

It is possible to utilise such splittings for the exponentiation of the central term in \R{eq:MagnusAssymetricSplit}, the commutator-eliminated form of our iteratively linearised Magnus-Hermite (MH) scheme. The combination of MH and BM is termed MHBM in the sequel.
When splittings methods can be directly applied to the nonlinear equations, they typically perform better, as seen by comparing BM vs MHBM in Figure \ref{fig: positive nonlinear, time-dependent convergence plot}.

However, a large class of splittings that feature commutators, such as {\em compact} splittings \cite{chin05fsi,omelyan2003symplectic}, {\em asymptotic} splittings such as symmetric Zassenhaus splittings \cite{bader14eas} Magnus--Zassenhaus splitting \cite{iserles2019solving}, and specialised splittings for laser-matter interaction \cite{iserles2019compact} have been developed in recent years, and there is no straightforward way of extending their application to nonlinear equations.
We demonstrate that out iterative linearisation procedure allows the use of such splittings for nonlinear equations. In particular, we combine our Magnus-Hermite (MH) schemes with the Chin--Chen splitting \cite{chin05fsi}, after the elimination of the commutators \R{eq:MagnusAssymetricSplit}. This method is referred to as MHC in the sequel.

We consider the following examples, where the Hamiltonian is given by
\begin{equation}
\label{eq: 1D example}
\ii \partial_t u(x,t) = -\partial^2_x u(x,t) + V(u, x, t)  u(x,t), \qquad u(x,0) = u_0(x) \in \BB{H},\quad t \geq 0, \quad \lambda \in \BB{R},
\end{equation}
where the total potential $V(u, x, t)$ is a function of $u(x,t)$, $x$, and $t$, containing a static potential $V^0(x)$, an external field $V^{\mathrm{e}}(x,t)$, and a nonlinear potential $V^{\mathrm{nl}}$:
\[ V(u, x, t) = V^0(x) + V^{\mathrm{e}}(x,t) + V^{\mathrm{nl}}(u) ,\]
where
\[ V^{\mathrm{nl}}(u) = \lambda  |u|^2 .\]
We chose the static potential
\[ V^0(x) = x^4 - 10x^2, \]
the external field
\[ V^{\mathrm{e}}(x,t) = 5  \sin(5\pi t)  \sin(\pi x), \]
and the initial condition as the as wave packet
\[ u_0(x) = \exp\left(-\frac{(x - x_0)^2}{2\sigma^2}\right) ,\]
where $ x_0 = -2.0 $ and $ \sigma^2 = 0.25 $. We consider the solution on a periodic spatial grid $[-10,10]$, discretised with $1000$ grid points, and use a normalised version of $u$ as an initial condition $u_0$.

\begin{example}[GP, defocusing, with external time-dependent field]
    \normalfont
    \label{ex: positive nonlinear, time-dependent}
    In this example, we consider the potential
    \[ V(u, x, t) = V^0(x) + V^{\mathrm{e}}(x,t) + V^{\mathrm{nl}}(u) , \qquad \text{with}\ \lambda = 10\ \text{in}\ V^{\mathrm{nl}},\]
    with static potential $V^0(x)$,
    external field $V^{\mathrm{e}}(x,t)$, and nonlinear potential $V^{\mathrm{nl}}$.

    The evolution of solution is shown in Figure \ref{fig: positive nonlinear, time-dependent solutions plot}, while the convergence of the $L^2$ error of $u$ at $t = 1$ computed by different methods is shown in Figure \ref{fig: positive nonlinear, time-dependent convergence plot}. Observables over time are shown in Figure \ref{fig: observables over time plot}, and the change in observables with time is shown in Figure \ref{fig: change in observables plot}.

    \begin{figure}[H]
        \centering
            \resizebox{\textwidth}{!}{\input{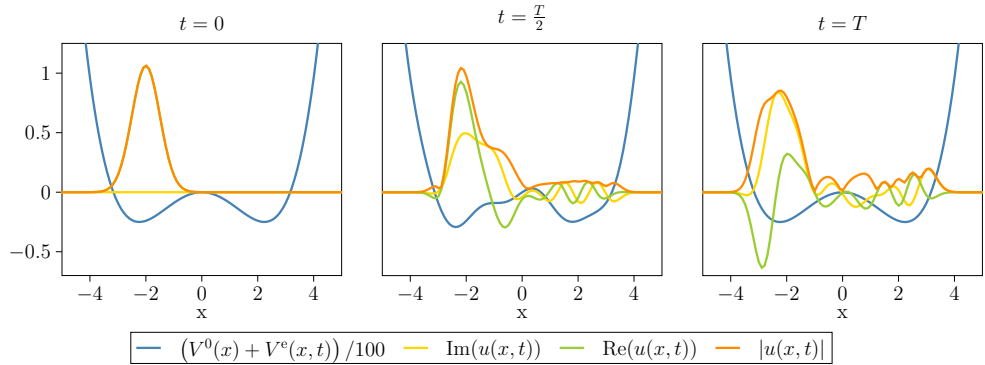}}
            \caption{The linear part of the potential, $V^0(x) + V^{\mathrm{e}}(x,t)$ (scaled down by a factor of $100$ for ease of visualisation), real and imaginary parts and absolute value of the wave function $u(x,t)$ at initial time $t = 0$ (left), intermediate time $t =T/2$ (middle) and final time $t = T$ (right), where $T = 1$.
            }
            \label{fig: positive nonlinear, time-dependent solutions plot}
    \end{figure}

    \begin{figure}[H]
        \centering
            \resizebox{\textwidth}{!}{
\begin{tikzpicture}

\definecolor{crimson2143940}{RGB}{214,39,40}
\definecolor{darkgray176}{RGB}{176,176,176}
\definecolor{darkorange25512714}{RGB}{255,127,14}
\definecolor{forestgreen4416044}{RGB}{44,160,44}
\definecolor{gray}{RGB}{128,128,128}
\definecolor{lightgray204}{RGB}{204,204,204}
\definecolor{steelblue31119180}{RGB}{31,119,180}

\begin{axis}[
legend cell align={left},
legend style={
  fill opacity=0.8,
  draw opacity=1,
  text opacity=1,
  at={(1.05,1)},
  anchor=north west,
  draw=lightgray204
},
log basis x={10},
log basis y={10},
tick align=outside,
tick pos=left,
unbounded coords=jump,
x grid style={darkgray176},
xlabel={timestep size (h)},
xmin=0.000794328234724281, xmax=0.125892541179417,
xmode=log,
xtick style={color=black},
xtick={1e-05,0.0001,0.001,0.01,0.1,1,10},
xticklabels={
  \(\displaystyle {10^{-5}}\),
  \(\displaystyle {10^{-4}}\),
  \(\displaystyle {10^{-3}}\),
  \(\displaystyle {10^{-2}}\),
  \(\displaystyle {10^{-1}}\),
  \(\displaystyle {10^{0}}\),
  \(\displaystyle {10^{1}}\)
},
y grid style={darkgray176},
ylabel={$L^2$ error at T = 1},
ymin=6.66261822360617e-12, ymax=4.88149216703617,
ymode=log,
ytick style={color=black},
ytick={1e-14,1e-12,1e-10,1e-08,1e-06,0.0001,0.01,1,100,10000},
yticklabels={
  \(\displaystyle {10^{-14}}\),
  \(\displaystyle {10^{-12}}\),
  \(\displaystyle {10^{-10}}\),
  \(\displaystyle {10^{-8}}\),
  \(\displaystyle {10^{-6}}\),
  \(\displaystyle {10^{-4}}\),
  \(\displaystyle {10^{-2}}\),
  \(\displaystyle {10^{0}}\),
  \(\displaystyle {10^{2}}\),
  \(\displaystyle {10^{4}}\)
}
]
\addplot [thick, steelblue31119180]
table {%
0.1 0.522414983659632
0.03125 0.0325888880380696
0.01 0.00305613942968701
0.00315457413249211 0.000303023814785296
0.001 3.04398912564862e-05
};
\addlegendentry{Strang}
\addplot [thick, darkorange25512714]
table {%
0.1 0.0165534136129378
0.03125 0.000337719918953307
0.01 1.01947509427572e-06
0.00315457413249211 2.2972921684453e-09
0.001 2.30652896773097e-11
};
\addlegendentry{BM}
\addplot [thick, forestgreen4416044]
table {%
0.1 nan
0.03125 nan
0.01 1.85717704073659e-05
0.00315457413249211 1.01457412176613e-07
0.001 1.02136849083069e-09
};
\addlegendentry{MHC}
\addplot [thick, crimson2143940, dashed]
table {%
0.1 1.41006330835206
0.03125 1.34009429813865
0.01 1.08982111175528e-05
0.00315457413249211 1.07205764947415e-07
0.001 1.08047609927121e-09
};
\addlegendentry{MHBM}
\addplot [thick, black, dash pattern=on 1pt off 3pt on 3pt off 3pt]
table {%
0.1 1
0.03125 0.09765625
0.01 0.01
0.00315457413249211 0.000995133795738837
0.001 0.0001
};
\addlegendentry{$O(h^2)$}
\addplot [thick, black, dashed]
table {%
0.1 0.01
0.03125 9.5367431640625e-05
0.01 1e-06
0.00315457413249211 9.90291271421585e-09
0.001 1e-10
};
\addlegendentry{$O(h^4)$}
\addplot [thin, gray, dotted, forget plot]
table {%
0.0999999999999999 6.66261822360617e-12
0.0999999999999999 4.88149216703616
};
\addplot [thin, gray, dotted, forget plot]
table {%
0.00899999999999999 6.66261822360617e-12
0.00899999999999999 4.88149216703616
};
\addplot [thin, gray, dotted, forget plot]
table {%
0.008 6.66261822360617e-12
0.008 4.88149216703616
};
\addplot [thin, gray, dotted, forget plot]
table {%
0.007 6.66261822360617e-12
0.007 4.88149216703616
};
\addplot [thin, gray, dotted, forget plot]
table {%
0.006 6.66261822360617e-12
0.006 4.88149216703616
};
\addplot [thin, gray, dotted, forget plot]
table {%
0.005 6.66261822360617e-12
0.005 4.88149216703616
};
\addplot [thin, gray, dotted, forget plot]
table {%
0.004 6.66261822360617e-12
0.004 4.88149216703616
};
\addplot [thin, gray, dotted, forget plot]
table {%
0.003 6.66261822360617e-12
0.003 4.88149216703616
};
\addplot [thin, gray, dotted, forget plot]
table {%
0.002 6.66261822360617e-12
0.002 4.88149216703616
};
\addplot [thin, gray, dotted, forget plot]
table {%
0.001 6.66261822360617e-12
0.001 4.88149216703616
};
\addplot [thin, gray, dotted, forget plot]
table {%
0.09 6.66261822360617e-12
0.09 4.88149216703616
};
\addplot [thin, gray, dotted, forget plot]
table {%
0.0799999999999999 6.66261822360617e-12
0.0799999999999999 4.88149216703616
};
\addplot [thin, gray, dotted, forget plot]
table {%
0.07 6.66261822360617e-12
0.07 4.88149216703616
};
\addplot [thin, gray, dotted, forget plot]
table {%
0.06 6.66261822360617e-12
0.06 4.88149216703616
};
\addplot [thin, gray, dotted, forget plot]
table {%
0.05 6.66261822360617e-12
0.05 4.88149216703616
};
\addplot [thin, gray, dotted, forget plot]
table {%
0.04 6.66261822360617e-12
0.04 4.88149216703616
};
\addplot [thin, gray, dotted, forget plot]
table {%
0.03 6.66261822360617e-12
0.03 4.88149216703616
};
\addplot [thin, gray, dotted, forget plot]
table {%
0.02 6.66261822360617e-12
0.02 4.88149216703616
};
\addplot [thin, gray, dotted, forget plot]
table {%
0.01 6.66261822360617e-12
0.01 4.88149216703616
};
\addplot [thin, gray, dotted, forget plot]
table {%
0.000794328234724281 1e-11
0.125892541179416 1e-11
};
\addplot [thin, gray, dotted, forget plot]
table {%
0.000794328234724281 1e-10
0.125892541179416 1e-10
};
\addplot [thin, gray, dotted, forget plot]
table {%
0.000794328234724281 1e-09
0.125892541179416 1e-09
};
\addplot [thin, gray, dotted, forget plot]
table {%
0.000794328234724281 1e-08
0.125892541179416 1e-08
};
\addplot [thin, gray, dotted, forget plot]
table {%
0.000794328234724281 1e-07
0.125892541179416 1e-07
};
\addplot [thin, gray, dotted, forget plot]
table {%
0.000794328234724281 1e-06
0.125892541179416 1e-06
};
\addplot [thin, gray, dotted, forget plot]
table {%
0.000794328234724281 1e-05
0.125892541179416 1e-05
};
\addplot [thin, gray, dotted, forget plot]
table {%
0.000794328234724281 0.0001
0.125892541179416 0.0001
};
\addplot [thin, gray, dotted, forget plot]
table {%
0.000794328234724281 0.001
0.125892541179416 0.001
};
\addplot [thin, gray, dotted, forget plot]
table {%
0.000794328234724281 0.01
0.125892541179416 0.01
};
\addplot [thin, gray, dotted, forget plot]
table {%
0.000794328234724281 0.1
0.125892541179416 0.1
};
\addplot [thin, gray, dotted, forget plot]
table {%
0.000794328234724281 1
0.125892541179416 1
};
\end{axis}

\end{tikzpicture}}
            \caption{$L^2$ error in $u(T)$ for Example~\ref{ex: positive nonlinear, time-dependent}}
            \label{fig: positive nonlinear, time-dependent convergence plot}
    \end{figure}
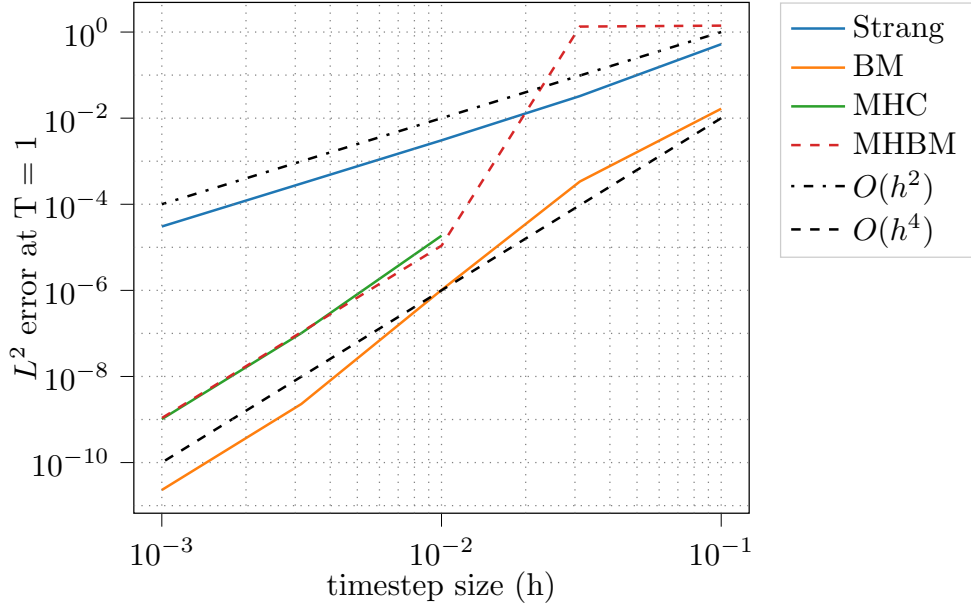

\end{example}

\begin{example}[GP, defocusing, without external time-dependent field]
    \normalfont
    \label{ex: positive nonlinear, time-independent}
    In this example, we consider the potential
    \[ V(u, x, t) = V^0(x) + V^{\mathrm{nl}}(u) , \qquad \text{with}\ \lambda = 10\ \text{in}\ V^{\mathrm{nl}},\]
    while the external field $V^{\mathrm{e}}(x,t)$ is absent.
\end{example}

\begin{example}[GP, focusing, without external time-dependent field]
    \normalfont
    \label{ex: negative nonlinear, time-independent}
    In this example, we consider the potential
    \[ V(u, x, t) = V^0(x) + V^{\mathrm{nl}}(u) , \qquad \text{with}\ \lambda = -10\ \text{in}\ V^{\mathrm{nl}},\]
    while the external field $V^{\mathrm{e}}(x,t)$ is absent.
\end{example}

\begin{example}[NLS, defocusing]
    \normalfont
    \label{ex: positive nonlinear part only}
    In this example, we consider the potential
    \[ V(u, x, t) = V^{\mathrm{nl}}(u), \qquad \text{with}\  \lambda = 10\ \text{in}\ V^{\mathrm{nl}},\]
    i.e., with the nonlinear potential only.
\end{example}

\begin{figure}[H]
    \centering
        \resizebox{\textwidth}{!}{\input{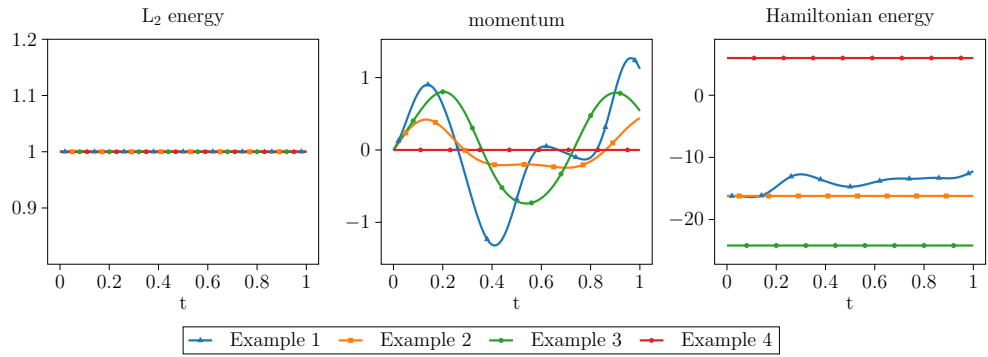}}
        \caption{Observables over time for Examples \ref{ex: positive nonlinear, time-dependent}, \ref{ex: positive nonlinear, time-independent}, \ref{ex: negative nonlinear, time-independent} and \ref{ex: positive nonlinear part only}.}
        \label{fig: observables over time plot}
\end{figure}

\begin{figure}[H]
    \centering
        \resizebox{\textwidth}{!}{\input{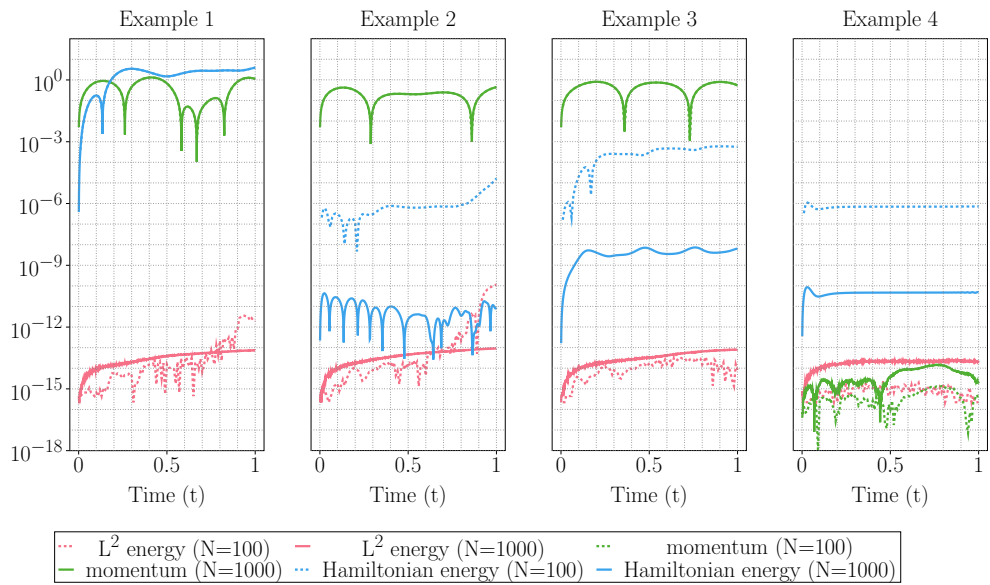}}
        \caption{Change in observables for Examples \ref{ex: positive nonlinear, time-dependent}, \ref{ex: positive nonlinear, time-independent}, \ref{ex: negative nonlinear, time-independent} and \ref{ex: positive nonlinear part only} over time, where $N$ is the number of time steps.}
        \label{fig: change in observables plot}
\end{figure}

Observables over time for all the examples computed with the MHC method are shown in Figure \ref{fig: observables over time plot}. The $\CC{L}^2$ energy is always preserved. The momentum is preserved only in case of the NLS, i.e, Example~\ref{ex: positive nonlinear part only}. The Hamiltonian energy is preserved up to $\O{h^4}$ except in the scenario presented in Example~\ref{ex: positive nonlinear, time-dependent}, which involves an external time-independent field. These behaviours are in line with the discussion in Section~\ref{sec:energy}. The changes in observables with time are shown in Figure \ref{fig: change in observables plot}.

\subsection{MH with a Krylov subspace method}

In this subsection, we consider an example where a splitting approach is not feasible. For example, when the Hamiltonian is a large sparse matrix, the Krylov method is particularly useful. Consider
\begin{equation}
\label{eq: matrix operator example}
\ii \partial_t u(t) = \mathcal{L}(t) u(t) + V^{\mathrm{nl}}( u(t))  u(t), \qquad u(0) = u_0 \in \BB{H},\quad t \geq 0, \quad \lambda \in \BB{R},
\end{equation}
where the linear self-adjoint operator $\mathcal{L}(t)$ can be decomposed into time-independent
and time-dependent components, $\mathcal{L}(t) = \mathcal{L}_0 + \mathcal{L}_1(t)$, where $\mathcal{L}_0 = L_0 $, and $\mathcal{L}_1(t) = V^{\mathrm{e}}(t) L_1$, with $L_0$ and $L_1$ being $n \times n$ matrices for $u$ discretised on $n$ points.

We combine our iteratively linearised Magnus-Hermite  expansion \eqref{eq:MagnusO2simplified} without elimination of commutators with a Lanczos approximation of the matrix exponential \cite{park1986unitary,saad2003iterative,iserles2018magnus}. The Lanczos method is a Krylov subspace method that is well suited to Hermitian and skew-Hermitian matrices and is one of the most commonly applied methods in quantum dynamics. We use the $\texttt{mkprop}$ package \cite{mkprop} for the implementation of the Lanczos method, with an accuracy $\texttt{tol}= 10^{-8}$. The overall scheme is denoted MHK.

We use randomly generated $n \times n$ Hermitian matrix $L_0$ and $L_1$,
and defocusing cubic nonlinear potential
\[V^{\mathrm{nl}}(u) = |u|^2.\]

\begin{example}[nonlinear, with external time-dependent field]
    \normalfont
    \label{ex: matrix, nonlinear, time-dependent}
We consider external time-dependent field
\[V^{\mathrm{e}}(t) =  \sin(5\pi t).\]

\end{example}




\begin{example}[nonlinear, without external time-dependent field]
    \normalfont
    \label{ex: matrix, nonlinear, time-independent}
    We consider no external time-dependent field
\[V^{\mathrm{e}}(t) \equiv 0.\]


\end{example}



\begin{figure}[H]
    \centering
        \resizebox{\textwidth}{!}{\input{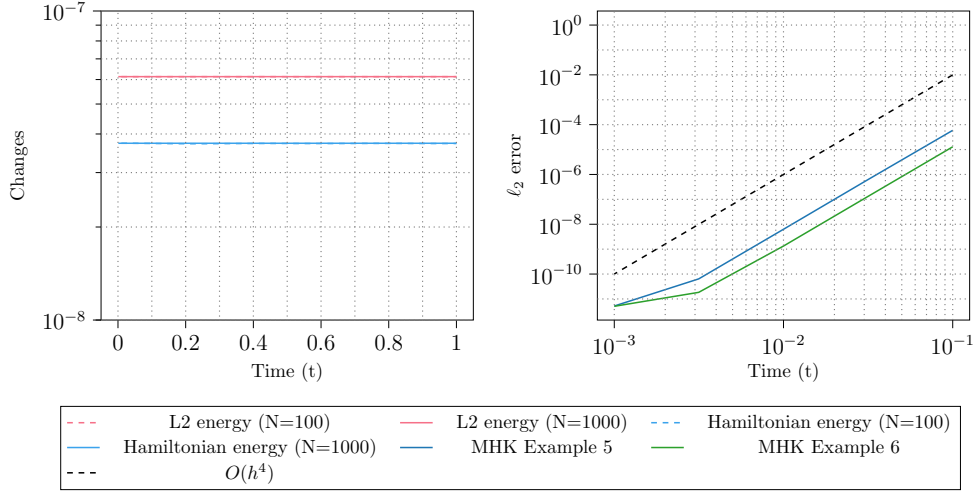}}
        \caption{Left: change of observables over time for Example \ref{ex: matrix, nonlinear, time-independent}. Right: convergence of the $\ell_2$ error of $u$ at $t = 1$ computed by different methods for Examples \ref{ex: matrix, nonlinear, time-dependent}, \ref{ex: matrix, nonlinear, time-independent}.}
        \label{fig: Krylov plots in one}
\end{figure}

In Examples~\ref{ex: matrix, nonlinear, time-dependent} and \ref{ex: matrix, nonlinear, time-independent}, the Hamiltonian energy $\langle u(t), \mathcal{L}_0 u(t)\rangle$ is preserved as desired, as can be seen in Figure
\ref{fig: Krylov plots in one} (left).
The convergence of the $\ell_2$ error of $u$ at $t = 1$ computed by Krylov method for Examples~\ref{ex: matrix, nonlinear, time-dependent} and \ref{ex: matrix, nonlinear, time-independent} is displayed in Figure \ref{fig: Krylov plots in one} (right). 

\setcounter{equation}{0}
\setcounter{figure}{0}
\section{Conclusions}
\label{sec:conclusions}

The main theme of this paper is an iterated linearisation of dispersive equations \R{eq:dispersive}, converting them into the solution of simpler systems \R{eq:IterativeScheme}. In particular, in the case of NLS \R{eq:NLS}, we convert a nonlinear problem into an iterated solution of significantly simpler linear Schr\"odinger equations, for which very effective and cheap methods are available. We proved that, in the case of NLS, this approach conserves  $\CC{L}_2$ energy and approximates well the Hamiltonian. 

However, there is nothing special in dispersive equations or in the form \R{eq:dispersive}. Our approach applies to numerous PDEs of evolution. An example of a dispersive equation that does not fit into the \R{eq:dispersive} pattern is the Korteveg--de Vries (KdV) equation
\begin{equation}
  \label{eq:KdV}
  \partial_t u=-\partial_x^3 u+6u\partial_x u,
\end{equation}
with an initial condition $u=u_0$ and suitable boundary conditions. An iterated linearisation is
\begin{Eqnarray}
  \label{eq:discKdV}
  u^{[0]}&=&u_0,\\
  \nonumber
  \partial_t u^{[k+1]}&=&-\partial_x^3 u^{[k+1]}+6(\partial_x u^{[k]}) u^{[k+1]},\qquad k\in\BB{Z}_+,
\end{Eqnarray}
and it relies on the fact that the linear equation
\begin{displaymath}
  \partial_t u=-\partial_x^3u+V(t,x)u,
\end{displaymath}
where $u$ is known, is considerably easier to solve.

Iterated linearisation can be also applied to non-dispersive PDEs. Consider, for example, the parabolic {\em Fitzhugh--Nagumo system\/}
\begin{Eqnarray}
  \label{eq:FitzhughNagumo}
  \partial_t v&=&d_v\Delta v+f(v,w),\\
  \nonumber
  \partial_t w&=& d_w\Delta w+g(u,w)
\end{Eqnarray}
from mathematical biology \cite{izhikevich07dsn}, given with an initial condition $v=v_0$, $w=w_0$ and Dirichlet boundary conditions. $f$ and $g$ are given functions, typically low-degree polynomials, while $d_v,d_w>0$ are given constants. Provided that $f(v,w)=\tilde{f}(v,w)v$ and $g(v,w)=\tilde{g}(v,w)w$, we may consider the iterated linearisation
\begin{Eqnarray}
  \label{eq:FN_iteration}
  v^{[0]}&=&v_0,\qquad w^{[0]}=w_0,\\
  \nonumber
  \partial_t v^{[k+1]}&=&d_v\Delta v^{[k+1]}+\tilde{f}(v^{[k]},w^{[k]})v^{[k+1]},\\
  \nonumber
  \partial_t w^{[k+1]}&=&d_w \Delta w^{[k+1]}+\tilde{g}(v^{[k]},w^{[k]})w^{[k+1]},\qquad k\in\BB{Z}_+.
\end{Eqnarray}
The computation of a diffusion equation is an easy numerical endeavour and this can be leveraged toward an effective iteration of the form \R{eq:FN_iteration}. 

Note that both \R{eq:discKdV} and \R{eq:FN_iteration} are merely at the level of preliminary ideas, without any stability or convergence analysis. Moreover, both our examples, KdV \R{eq:KdV} and the Fitzhugh--Nagumo system \R{eq:FitzhughNagumo}, possess a wealth of qualitative features which remain unexplored in the context of iterated linearisation. We mention the two equations mostly to illustrate the potential scope of our approach beyond the realm of dispersive equations. 

\section*{Acknowledgements}

The work of GC in this project has been supported by EPSRC (EP/S022945/1). The work of KK in this project has been supported by The National Center for Science (NCN), based on Grant No.\ 2019/34/E/ST1/00390.

\bibliographystyle{agsm}
\bibliography{refs}

\end{document}